\theoremstyle{plain}
  \newtheorem{thm}{Theorem}[section]
\theoremstyle{plain}
  \newtheorem{lem}[thm]{Lemma}
  \newtheorem{cor}[thm]{Corollary}
  \newtheorem{prop}[thm]{Proposition}
  \newtheorem{fact}[thm]{Fact}
\theoremstyle{definition}
  \newtheorem{defi}{Definition}[section]
\theoremstyle{remark}
  \newtheorem{exe}{Example}[section]
\crefname{lem}{Lemma}{Lemmas}
\Crefname{lem}{Lemma}{Lemmas}
\crefname{thm}{Theorem}{Theorems}
\Crefname{thm}{Theorem}{Theorems}
\crefname{cor}{Corollary}{Corollaries}
\Crefname{cor}{Corollary}{Corollaries}
\crefname{prop}{Proposition}{Propositions}
\Crefname{Prop}{Proposition}{Propositions}
\crefname{section}{Section}{Sections}
\Crefname{Section}{Section}{Sections}
\crefname{subsection}{Subsection}{Subsections}
\Crefname{Subsection}{Subsection}{Subsections}
\numberwithin{equation}{section}
\newenvironment{keywords}{\par\noindent\emph{Keywords: }}{}
\newenvironment{amscat}{\par\noindent\emph{AMS 2010 subject
  classifications:}\par}{}
\newenvironment{versionnotes}{\par\noindent\begingroup\small
  \emph{Version Notes: }}{\endgroup}
\newcommand\interval[4]{\mathopen{#1}#2\mathclose{},#3\mathclose{#4}}
\newcommand\intff[2]{\interval{[}{#1}{#2}{]}}
\newcommand\intoo[2]{\interval{(}{#1}{#2}{)}}
\newcommand\N{\mathbb{N}}
\newcommand\Ne{\N^{*}}
\newcommand\R{\mathbb{R}}
\newcommand\words{\mathcal{U}}
\newcommand\infinitewords{\words_{\infty}}
\renewcommand\root{\text{\o}}
\newcommand\prt[2][]{{#2}_{*#1}}
\newcommand\rays[1]{\partial{#1}}
\newcommand\noleaf{\mathscr{T}}
\newcommand\weightedtrees{\mathscr{T}_w}
\newcommand\weightedchopped{\weightedtrees^{\leq}}
\newcommand\weightedtreeswithrays{\mathscr{T}_{w,r}}
\newcommand\weightedtreeswithpath{\mathscr{T}_{w,p}}
\newcommand\gluetrees{\mathbin{\triangleleft}}
\newcommand\wt[1][t]{A_{#1}}
\newcommand\numch[1][t]{\nu_{#1}}
\newcommand\gcp{\wedge}
\newcommand\Se{\mathrm{S}_{\mathsf{e}}}
\newcommand\Sr{\mathrm{S}_{\mathsf{r}}}
\newcommand\St{\tilde{S}}
\newcommand\Shift{\mathrm{S}}
\newcommand\harm{\mathsf{HARM}}
\newcommand\unif{\mathsf{UNIF}}
\newcommand\GW{\mathbf{GW}}
\newcommand\pp{\mathbf{p}}
\newcommand\mureg{\mu_{\mathsf{r}}}
\newcommand\muex{\mu_{\mathsf{e}}}
\DeclarePairedDelimiter{\abs}{\lvert}{\rvert}
\newcommand\setof[2]{\left\{#1: #2 \right\}}
\newcommand\dd{\mathop{}\!\mathrm{d}}
\newcommand\indic[1]{\mathbf{1}_{\left\lbrace#1\right\rbrace}}
\newcommand\indi[1]{\mathbf{1}_{#1}}
\newcommand\compl[1]{{#1}^{\mathrm{c}}}
\let\P\relax
\DeclareMathOperator\P{\mathbb{P}}
\DeclareMathOperator\E{\mathbb{E}}
\DeclareMathOperator{\Pq}{P} 
\DeclareMathOperator{\Eq}{E} 
\DeclareMathOperator{\Et}{\mathbf{E}} 
\DeclareMathOperator{\Pt}{\mathbf{P}} 
\DeclareMathOperator{\Ea}{\E} 
\DeclareMathOperator{\Pa}{\P} 
\newcommand\brcd[2]{\left[#1 \mathrel{}\middle| \mathrel{} #2 \right]}
\newcommand\prcd[2]{\left(#1 \mathrel{}\middle| \mathrel{} #2 \right)}
\newcommand\et{\mathsf{et}} 
\newcommand\ep{\mathsf{ep}} 
\newcommand\ft{\mathsf{ft}} 
\newcommand\fp{\mathsf{fp}} 
\newcommand\rt{\mathsf{rt}} 
\newcommand\rp{\mathsf{rp}} 
\newcommand\rh{\mathsf{rh}} 
\DeclareMathOperator\ray{ray} 
\newcommand\Xa{\mathbf{X}} 
\newcommand\xa{\mathbf{x}}
\newcommand\Ya{\mathbf{Y}} 
\DeclareMathOperator\hausd{\dim_{\mathrm{H}}}
\DeclareMathOperator\distrays{d_{\infinitewords}}
\newcommand\visw{\overline{\mathsf{VIS}}}
\newcommand\vis{\mathsf{VIS}}
\renewcommand{\tilde}{\widetilde}
\title{Dimension Drop for Transient Random Walks on Galton-Watson Trees in
Random Environments}
  \author{Pierre Rousselin\thanks{LAGA, University Paris 13; Labex MME-DII}}
\date{\today}
\begin{document}
\maketitle
\begin{abstract}
  We prove that the dimension drop phenomenon holds for the harmonic measure associated to
  a transient random walk in a random environment (as defined in
  \cite{faraud_2011, lyons_pemantle_rwre})
  on an infinite Galton-Watson tree without leaves.
  We use regeneration times and ergodic theory 
  techniques from \cite{LPP_biased} to give an explicit construction of the
  invariant measure for the forward environment seen by the particule at exit
  times which is absolutely continuous with respect to the joint law of the tree
  and the path of the random walk.
\end{abstract}
\begin{keywords}
  Galton-Watson tree, random walk, harmonic measure, hausdorff dimension,
  invariant measure, dimension drop, random walk in random environments.
\end{keywords}
\begin{amscat}
  Primary 60J50, 60J80, 37A50;
  secondary 60J05, 60J10.
\end{amscat}
\begin{versionnotes}
  This is version 1.0. Any feedback is highly appreciated. 
\end{versionnotes}

\section*{Introduction}
\label{sec:intro}
Consider an infinite rooted tree $t$.
A ray in $t$ is an infinite path starting from its root, that never
backtracks.
The set of all rays in $t$ is called its boundary and denoted
$\rays t$. We put a natural topology on $\rays t$ by saying that two rays 
are close to each other when they coincide in a large ball centered at the root
of $t$.

We are interested in natural Borel probability measures on $\rays t$.
In many cases, such a measure $\theta$ happens to have full support.
However, it may turn out that $\theta$-almost every ray $\xi$ belongs to a
Borel subset $C_\theta$ of $\rays t$ that is ``small'', compared to the whole
boundary. To make this rigorous and quantitative, we put a suitable metric on
the boundary $\rays t$ and use Hausdorff dimensions. The Hausdorff
dimension of the measure $\theta$ is the Hausdorff dimension of a minimal 
Borel subset $C_\theta$ such that $\theta \left(C_\theta \right) = 1$. 
One says
that the \emph{dimension drop phenomenon} occurs for $\theta$ when this
dimension is strictly less than the dimension of the whole boundary.

Now, let us suppose that we have a transition matrix $\Pq^t$ on the vertices of
$t$ that defines a nearest-neighbour random walk on $t$, which is
\emph{transient}. By transience, almost every trajectory of the walk ``goes to
infinity'' and shares infinitely many vertices with a unique random ray $\Xi$.
The law of this random ray $\Xi$ is called the \emph{harmonic measure} on $\rays
t$, with respect to the transition matrix $\Pq^t$.

When the tree $T$ is a random, Galton-Watson tree, the dimension drop phenomenon
for the harmonic measure is already known to occur in some cases : see \cite{LPP95} for
the simple random walk, \cite{LPP_biased} for transient $\lambda$-biased random
walks and \cite{Curien_LeGall_harmonic, Shen_harmonic_infinite_variance,
rousselin_2017A} for other models depending on random
lengths on the Galton-Watson tree.

In this work, we prove that the dimension drop phenomenon occurs for the harmonic
measure with respect to a transition matrix defined by a 
\emph{random environment}
on the Galton-Watson tree.
This model was introduced in
\cite{lyons_pemantle_rwre} and has been extensively studied (see for instance
\cite{aidekon_2007, hu2016slow, andreoletti_debs, andreoletti_chen}).
We will use the definition from \cite{faraud_2011}, which is a generalization
and can be described as follows.
Let $\pp \coloneqq (p_k)_{k\geq 1}$ a sequence of non-negative real numbers such that
$p_1 < 1$ and $ \sum_{k\geq 1}^{} p_k = 1$. Assume that
$m \coloneqq \sum_{k\geq 1}^{} k p_k < \infty$.
Let $(N, A)$ a random element
in $\Ne\times\bigcup_{k\geq 1}\intoo{0}{\infty}^k$ such that the marginal
law of $N$ is $\pp$ and, for
any $k\geq 1$, if $N = k$, then $A$ lies in $\intoo{0}{\infty}^k$.
Build a random weighted Galton-Watson tree $T$ in the following way :
we start from a root $\root$ and pick a random element $(N,A)$. 
The number of children of the root is then $N$ and the children
$1$, $2$, \dots, $N$ of the root have respective weights
$A(1)$, $A(2)$, \dots, $A(N)$, where $\left(A(1), \dotsc, A(N)\right)
\coloneqq A$. Then, continue recursively in an independant manner on the
subtrees starting from the vertices $1$, \dots, $N$.

Now, conditionally on $T$, we start a nearest-neighbour
random walk $\Xa$ from $\root$ such that, if the walk is at a vertex
$x$ of $T$, the walk may jump to one of the children of $x$ with probability
equal to the weight of this child
divided by the sum of the weights of the children of $x$
plus $1$ while it goes back to the parent of $x$ with probability equal 
to 1 divided by
the same sum.
We know, from \cite{lyons_pemantle_rwre} and \cite{faraud_2011} a transience
criterion for this model, and assume throughout this work that we are in this regime.
Our main result is the following theorem:
\begin{thm}[Dimension drop for $\harm$]
  \label{thm:ddrop}
  Let $T$ a random weighted Galton-Watson tree.
  The harmonic measure $\harm_T$ is almost surely exact-dimensional and
  its Hausdorff dimension
  is almost surely a constant that equals
  \begin{equation*}
    \hausd \harm_T
    = \frac{1}{\Et \left[\kappa (T)\right]} 
    \E\left[ -\log \left(\harm_T \left(\Xi_1\right)\right) \kappa (T) \right],
  \end{equation*}
  with $\kappa$ defined by \eqref{eq:defbetakappa}.
  It is almost surely strictly less than the Hausdorff dimension of the whole
  boundary $\partial T$ (which is almost surely $\log m$), 
  unless the model reduces to a transient $\lambda$-biased random walk 
  (with a deterministic and constant $\lambda < m$) on an $m$-regular tree.
\end{thm}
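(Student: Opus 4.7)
The plan is to adapt the ergodic-theoretic strategy of \cite{LPP_biased} to the random-environment setting. I first introduce, for each $n \geq 1$, the $n$-th exit time $\tau_n$ of the walk $\Xa$ along $\Xi$, i.e.\ the first time at which $\Xa$ reaches $\Xi_n$ and never returns to $\Xi_{n-1}$ afterwards. Shifting the weighted tree to its subtree hanging at $\Xi_n$ and reindexing the forward trajectory $(\Xa_{\tau_n + k})_{k \geq 0}$ defines a measurable shift $\Se$ on $\weightedtreeswithpath$. The core technical step is to construct an ergodic $\Se$-invariant probability measure $\muex$ on $\weightedtreeswithpath$ that is absolutely continuous with respect to the annealed law of $(T, \Xa)$, with explicit density $\kappa(T) / \Et[\kappa(T)]$, where $\kappa(T)$ is the integer defined in \eqref{eq:defbetakappa}. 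Invariance comes from a regeneration decomposition that makes the blocks between successive exit times an i.i.d.\ sequence, and ergodicity follows from this block structure, as in the $\lambda$-biased case.

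With $\muex$ in hand, I apply Birkhoff's theorem to the observable $F(T, \Xa) \coloneqq -\log \harm_T(\Xi_1)$. By the strong Markov property at the exit times, the push-forward of $\harm_T$ onto the subtree rooted at $\Xi_n$ coincides with its own harmonic measure, so $-\log \harm_T(\cyl[n]{\Xi})$ telescopes exactly into $\sum_{k=0}^{n-1} F \circ \Se^k$. Birkhoff then yields, almost surely,
\begin{equation*}
  -\frac{1}{n} \log \harm_T(\cyl[n]{\Xi})
  \xrightarrow[n \to \infty]{}
  \int F \dd \muex
  = \frac{\E\bigl[-\log \harm_T(\Xi_1)\, \kappa(T)\bigr]}{\Et[\kappa(T)]},
\end{equation*}
which is the announced deterministic constant. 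A standard mass-distribution argument on $\rays T$ then upgrades this pointwise limit along the $\harm_T$-typical ray $\Xi$ into the exact-dimensionality of $\harm_T$.

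For the strict inequality $\hausd \harm_T < \log m$, I use Hawkes's theorem, which bounds $\hausd \theta$ by $\log m$ for any Borel probability measure $\theta$ on $\rays T$, and then identify when equality in the ergodic identity can be achieved. Combining the identity with Jensen's inequality applied levelwise to the harmonic probabilities on the children of each vertex (bounding $F$ by the entropy $\log N$) and then again to the offspring distribution (bounding $\Et[\log N]$ by $\log m$), the equality case $\hausd \harm_T = \log m$ forces both $N$ to be deterministic equal to $m$ and the weights $A(1), \dots, A(N)$ to be deterministic and constant at every vertex. In the weighted-tree model, this is exactly the $\lambda$-biased random walk with a deterministic constant $\lambda < m$ on the $m$-regular tree, i.e.\ the degenerate case excluded by the theorem. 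The step I expect to be the main obstacle is the construction and analysis of $\muex$: the extra randomness carried by the weights $A$ makes the regeneration decomposition more intricate than in \cite{LPP_biased}, and the integrability of $-\log \harm_T(\Xi_1)\, \kappa(T)$ under the annealed law is nontrivial because $\kappa(T)$ can be large precisely when the environment near the backbone is atypical; the necessary moment controls will likely come from quantitative versions of the transience criterion of \cite{lyons_pemantle_rwre, faraud_2011}.
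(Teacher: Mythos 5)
Your first half (the exit-time shift, the construction of an ergodic $\Se$-invariant measure $\muex$ from a regeneration/block decomposition, Birkhoff applied to $-\log\harm_T(\Xi_1)$, and the telescoping of $-\log\harm_T(\Xi_n)$ via the flow-rule property) is exactly the paper's route and is sound. Two small corrections: $\kappa(T)$ in \eqref{eq:defbetakappa} is a positive real number, not an integer; and the density of $\muex$ on the tree-path space is $\kappa(T)\beta(T)^{-1}\indic{\tau_{\prt\root}=\infty}/\Et[\kappa(T)]$ --- it is only the projection $\mu_\harm$ onto the tree marginal that has density $\kappa(T)/\Et[\kappa(T)]$ with respect to $\GW$.

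The strict inequality is where you have a genuine gap: your two applications of Jensen are taken with respect to different measures and cannot be chained. Shannon's inequality levelwise gives
\begin{equation*}
\sum_{i=1}^{\nu_t(\root)}\harm_t(i)\bigl(-\log\harm_t(i)\bigr)\le\log\nu_t(\root),
\end{equation*}
but the ergodic identity requires integrating this against $\mu_\harm$, whose density $\kappa(T)/\Et[\kappa(T)]$ is correlated with $\nu_T(\root)$; the resulting bound is $\Et\left[\log\left(\nu_T(\root)\right)\kappa(T)\right]/\Et\left[\kappa(T)\right]$, and there is no reason for this to be at most $\log m=\log\Et[N]$ --- your second Jensen step would be valid only under $\GW$, not under $\mu_\harm$. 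The paper avoids this by comparing $\harm$ not with the uniform-on-children flow rule but with the limit-uniform flow rule $\unif$ built from the Seneta--Heyde martingale limit $W$: Shannon's inequality bounds the entropy of $\harm$ by the cross-entropy $\int(-\log\unif_t(\xi_1))\dd(\mu_\harm\ltimes\harm)$, and this equals exactly $\log m$ under \emph{any} shift-invariant measure because $-\log\unif_t(\xi_1)=\log m+\log W(t)-\log W(t[\xi_1])$ is $\log m$ plus a coboundary bounded below by $-\log m$, whose integral vanishes by the ergodic lemma of \cite{LPP95}. (This same comparison also yields the integrability of $-\log\harm_T(\Xi_1)$ under $\muex$ that you flagged as a worry; no quantitative transience estimate is needed.) Finally, the equality analysis is itself a substantial lemma: one must show that $\harm_T=\unif_T$ almost surely forces $\pp$ to be degenerate and the weights to be almost surely constant, which in the paper uses the recursions for $\beta$ and $W$, conditional expectations and the independence structure of the Galton--Watson tree --- it does not follow merely from the equality case of Jensen applied to the offspring law.
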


Our results are inspired by the work of Lyons, Peres and Pemantle on transient
$\lambda$-biased random walks on Galton-Watson trees (\cite{LPP_biased}).
We use in the same way the notions of exit times and regeneration times to build
an invariant measure for the forward environment seen by the particle at exit times. The
construction of this measure, via a Rokhlin tower, was already suggested in
\cite{LPP_biased}.

The paper is organized as follows.
In \cref{sec:preliminaries}, we introduce our notations and define
the space of weighted trees, the transient paths on such trees,
the exit times and
regeneration times of such paths.
We also recall a transience
criterion by Lyons and Pemantle (\cite{lyons_pemantle_rwre}), generalized by Faraud (\cite{faraud_2011}).
In \cref{sec:regeneration}, we show that there are almost surely infinitely many
regeneration times and find an invariant measure for the forward environment
seen by the particle at such times. Again, this follows the ideas of
\cite{LPP_biased}, but we give detailed proofs in our setting of weighted trees
for completeness.
The heart of this work is \cref{sec:invariant_exit}, where we give a detailed 
``tower construction'' over the preceding dynamical system to build an invariant measure
for the forward environment seen by the particle at exit times. 
We then show that this measure has a density with respect to the joint law of the tree and the random
path on it and give an expression of this density.
We conclude in \cref{sec:invariant_harm} by projecting this measure on the
space of trees with a random ray on it and using the general theory of flow
rules on Galton-Watson trees developed in \cite{LPP95}.

\paragraph{Acknowledgements.}
The author is very grateful to his Ph.D. supervisors 
Julien Barral and Yueyun Hu for many
interesting discussions and constant help and support.
He also thanks \'Elie A\"idekon for encouraging him to write
this work.

\section{Notations and preliminaries}
\label{sec:preliminaries}

\subsection{Words and paths}

Let $\Ne \coloneqq \left\lbrace 1, 2, \dotsc \right\rbrace $ be our alphabet.
The set of all finite words on $\Ne$ is
\begin{equation*}
  \words \coloneqq \bigsqcup_{k = 0}^{\infty} \left(\Ne\right)^k,
\end{equation*}
with the convention
$\left(\Ne\right)^0 \coloneqq \left\lbrace\root\right\rbrace$,
$\root$ being the empty word.
The length $\abs{x}$ of a word $x$ is the unique integer such that $x$ belongs to
$\left(\Ne\right)^{\abs{x}}$.
The concatenation of the words $x$ and $y$ is denoted by $x y$.
A word 
$x = (i_1, i_2, \dots, i_{\abs{x}})$ 
is called a prefix of a word 
$y = (j_1, j_2, \dots, j_{\abs{y}})$
when either $x = \root$ or
$\abs{x} \leq \abs{y}$ and $i_\ell = j_\ell$ for any $\ell \leq \abs{x}$.
We denote
$\preceq$ this partial order and $x \gcp y$ the greatest common prefix of
$x$ and $y$. The parent of a non-empty word 
$x = (i_1, i_2, \dots, i_{\abs{x}})$ 
is
$\prt{x} \coloneqq (i_1 , i_2 , \dotsc , i_{\abs{x}-1})$ if
$\abs{x} \geq 2$; 
otherwise
it is the empty word $\root$. 
We also say that $x$ is a child of $\prt{x}$.
When a finite word $x$ is a prefix of a word $y$, we  define
$x^{-1} y$ as the unique word $z$ such that $y = xz$.

We add an artifical parent of the empty word.
Let $\prt\root$ an arbitrary element that does not belong to
$\words$ and 
$\prt\words \coloneqq \words \cup \left\lbrace \prt\root \right\rbrace$.
We let
$\abs{\prt\root} \coloneqq -1$, $ \prt{\left(\root\right)} \coloneqq \prt\root$
and, for all $x$ in $\words$, $x \prt\root = \prt\root x = \prt{x}$.

An infinite path in $\prt\words$ is a sequence
$
  \xa = \left(x_0, x_1, \dotsc\right)
$
such that for any $k \geq 0$, $x_{k+1}$ is either a child of $x_k$ or its
parent.
A \emph{transient path} is an infinite path $\xa$ such that
$\lim_{k \to \infty} \abs{x_k} = \infty$.

For such a path $\xa$, we define :
\begin{itemize}
  \item the set of \emph{fresh times} :
    \begin{equation*}
      \ft \left(\xa\right)
      \coloneqq
      \setof{s \geq 0}{ \forall k < s, x_k \neq x_s} 
      \eqqcolon \left\lbrace \ft_0 \left(\xa\right), 
      \ft_1 \left(\xa\right), \dots \right\rbrace,
    \end{equation*}
    where $\ft_0 \left(\xa\right) < \ft_1 \left(\xa\right) < \dotsb$ ;
  \item the set of \emph{exit times}
    \begin{equation*}
      \et \left(\xa\right)
      \coloneqq
      \setof{s \geq 0}{ \forall k > s, x_k \neq \prt{\left(x_s\right)}} 
      \eqqcolon 
      \left\lbrace
        \et_0 \left(\xa\right), \et_1 \left(\xa\right), \dots 
      \right\rbrace,
    \end{equation*}
    where $\et_0 \left(\xa\right) < \et_1 \left(\xa\right) < \dotsb$ ;
  \item the \emph{exit points},
    $\ep_k \left(\xa\right) \coloneqq x_{ \et_k \left(\xa\right) }$, for
    $k = 0, 1, \dotsc$ ;
  \item the set of \emph{regeneration times} :
    \begin{equation*}
      \rt \left(\xa\right) \coloneqq \ft \left(\xa\right) \cap
      \et \left(\xa\right).
    \end{equation*}
    Likewise, the regeneration times (if there are any) are ordered
    $
      \rt_0 \left(\xa\right) < \rt_1 \left(\xa\right) < \dotsb,
    $
    and if there are at least $k$ regeneration times,
    $\rp_k \left(\xa \right) \coloneqq x_{\rt_k \left(\xa\right)}$ is the
    $k$-th \emph{regeneration point} and
    $\rh_k \left(\xa\right) \coloneqq \abs{\rp_k \left(\xa\right)}$ is the
    $k$-th \emph{regeneration height}.
  \item for $u \in \prt\words$, the \emph{first hitting time} of
    the path $\xa$ to $u$ is
    \begin{align*}
      \tau_u \left(\xa\right) 
      \coloneqq 
      \inf \setof{ s \geq 0 }{ x_s = u},
    \end{align*}
    with the convention $\inf \emptyset = + \infty$.
\end{itemize}

A \emph{ray} is an infinite path $\rho$ such that
$\rho_0 = \root$ and for each $k \geq 0$, $\rho_{k+1}$ is a child of $\rho_k$.
In particular, for each $k \geq 0$, $\abs{\rho_k} = k$. Any transient path $\xa$
starting from $x_0 = \root$ defines a ray
\begin{equation*}
  \ray \left(\xa\right) 
  \coloneqq
  \left(
    \ep_0 \left(\xa\right), \ep_1 \left(\xa\right), \dotsc 
  \right).
\end{equation*}

Let 
$\infinitewords \coloneqq \left(\Ne\right)^{\Ne}$
the set of all infinite words. 
For $k \geq 0$, the $k$-th truncation of an infinite word $\xi$ is the finite word composed of
its $k$ first letters and is denoted $\xi_k$, with $\xi_0 \coloneqq \root$.
The mapping $\xi \mapsto \left(\xi_0, \xi_1, \xi_2, \dotsc \right)$ is a
bijection between infinite words and rays, therefore we will often abuse
notation and write $\xi$ for both the infinite word and the ray associated to it.
When a finite word $x$ is a truncation of an infinite word
$\xi$, we still say that $x$ is a prefix of $\xi$.
For two distinct infinite words $\xi$ and $\eta$, we may again
consider their greatest common prefix $\xi \gcp \eta \in \words$ and define
the \emph{natural distance} between each other by
\begin{equation}
\label{eq:dist1}
  \distrays \left(\xi, \eta \right) 
  =
  e^{- \abs{\xi \gcp \eta}}.
\end{equation}
This makes $\infinitewords$ into a complete, separable, ultrametric space.

\subsection{Trees and flows}

We represent our trees as subsets of the finite words on the alphabet $\Ne$.
A (rooted, planar, locally finite) tree $t$, \emph{without leaves},
is a subset of $\prt\words$ such that : 
\begin{itemize}
  \item $\root$ and $\prt\root$ are in $t$ ;
  \item for any $x \neq \prt\root$ in $t$,
    there exists a unique positive integer, denoted by $\numch (x)$ 
    and called the
    number of children of $x$ in $t$, such that for any $i \in \Ne$,
    $x i$ is in $t$ if and only if $i \leq \numch(x)$.
\end{itemize}
In this context, we call $\root$ the \emph{root} of $t$.
The tree $t$ is endowed with the undirected graph structure obtained by drawing
an edge between each vertex and its children.

We say that an infinite path $\xa$ in $\prt\words$
\emph{belongs} in $t$ if for any $k \geq 0$, $x_k$ is in $t$. The
\emph{boundary} of $t$ is the set
$\rays{t}$ of all rays that belong in it.
It is a compact subspace of $\infinitewords$.

A \emph{flow} on $t$ is a function $\theta$ from $t$ to $\intff{0}{1}$, such
that $\theta(\root) = 1$ and for any $x \in t$,
\begin{equation*}
  \theta (x) = \sum_{i=1}^{\nu_t(x)} \theta(xi).
\end{equation*}
Let $M$ a Borel probability on $\partial t$.
We may define a flow $\theta_M$ on $t$ by setting, for all 
$x \neq \prt\root$ in $t$, 
$\theta_M (x) = M \left(\setof{\xi \in \rays t}{x \prec \xi\right)}$. 
By a monotone class argument, the mapping $M \mapsto \theta_M$ is a
bijection and we will write $\theta$ for
both the flow on $t$ and the associated Borel probability measure on $\rays{t}$.

The (upper) \emph{Hausdorff dimension} of a flow $\theta$ 
on the tree $t$ is defined by
\begin{equation*}
  \hausd (\theta) 
  \coloneqq 
  \inf 
    \setof{\hausd (B) }{B \text{ Borel subset of } \rays{t}, \, \theta(B) = 1}.
\end{equation*}
When, for some $\alpha \geq 0$, $\theta$ satisfies
\[
  -\frac{1}{n} \log \left(\theta(\xi_n)\right)
  \xrightarrow[ n \to \infty ]{  }
  \alpha
  \quad\quad
  \text{for $\theta$-almost every $\xi$,}
\]
one says that $\theta$ is \emph{exact-dimensional} and many alternative
definitions of its dimension coincide (see
\cite[chapter~10]{falconer1997techniques} and \cite{mattila2000dimension})
. In particular, $\hausd\theta = \alpha$.
We say that the \emph{dimension drop phenomenon occurs for $\theta$} when
$\hausd \theta < \hausd \rays t$.

\subsection{Weighted trees}
\label{sub:weighted_trees}

A \emph{weighted tree} is a tree $t \in \noleaf$ together with a function $\wt$
from $t \setminus \left\{ \root, \prt\root\right \}$ to $ \intoo{0}{\infty}$.
For $x$ in $t \setminus\{ \prt\root, \root \}$, we call $\wt(x)$ the
\emph{weight}
of $x$ in $t$.

We will only work with weighted trees but to lighten notations, we will write
$t$ when we should write $ \left(t, \wt\right)$.
We still, however, write $x \in t$ when we mean that a word $x$ is a vertex of
the weighted tree $t$.

We define the (local) distance between two weighted trees $t$ and $t'$ by
\begin{equation*}
  d_{w} \left(t, t'\right) 
  = 
  \sum_{r \geq 0}^{} 2^{-r-1} \delta^{(r)}_m \left(t, t'\right),
\end{equation*}
where $\delta^{(r)}_w$ is defined by
\begin{equation*}
  \delta^{(r)}_w \left(t, t'\right)
  =
  \left\lbrace
  \begin{array}{l}
    1 \text{ if $t$ and $t'$ (without their weights) do not agree up to height $r$}; \\
    \min \left( 1, 
      \sup\setof{\abs{\wt(x) - \wt[t'](x)}}%
        {x \in t, \, 1 \leq \abs{x} \leq r}
    \right) 
    \text{ otherwise.}
  \end{array}
  \right.
\end{equation*}
We denote 
$\weightedtrees$ the metric space of all weighted trees. It is a Polish space.
For a weighted tree $t$ and a vertex $x \in t$, we denote
\begin{equation*}
  t [x] \coloneqq \setof{u \in \prt\words}{x u \in t},
\end{equation*}
the reindexed subtree starting from $x$ with weights
\begin{equation*}
  \wt[ { t[x] }] (y) 
  \coloneqq
  \wt \left(x y\right), 
  \quad
  \forall y \in t[x] \setminus \left\{ \root, \prt\root \right\}.
\end{equation*}

The tree $t$ \emph{pruned} at a vertex $x \neq \prt\root$
in $t$ is the subset of $\prt\words$ defined by
\begin{equation*}
  t^{\leq x} 
  \coloneqq 
  \setof{ y \in t }{ x \nprec y }.
\end{equation*}
Notice that $x$ is in $t^{\leq x}$.
We will write $ t^{\leq x}$ when we mean $t^{\leq x}$ together with the
restriction of $\wt$ to $t^{\leq x}$.
The set of all pruned (weighted) trees is
\begin{equation*}
  \weightedchopped \coloneqq
  \setof{ t^{\leq x}}{t \in \weightedtrees, \, x \in t}.
\end{equation*}
We equip it with a local distance similar to $d_{w}$.

For two weighted trees $t$ and $t'$, and $x \neq \prt\root $ in $t$,
we define the glued weighted tree $t^{\leq x} \gluetrees t'$ as the tree 
\begin{equation*}
  t^{\leq x} \gluetrees t' \coloneqq
  t^{\leq x} \cup \setof{xy}{y \in t'\setminus\{\prt\root, \root\}}
\end{equation*}
together with the weights :
\begin{equation*}
  \wt[t^{\leq x} \gluetrees t'] (z)
  = \begin{cases}
  \wt (z) & \text{if $x \nprec z$} ; \\
  \wt[t'] (x^{-1}z) & \text{otherwise}.
\end{cases}
\end{equation*}
Notice that in particular the weight of $x$ in
$t^{\leq x}\gluetrees t'$ is still $\wt(x)$.
\subsection{Flow rules on weighted trees}
A (positive and consistent) flow rule on weighted trees
is a measurable mapping $t \mapsto \Theta_t$ from a Borel subset $\mathcal{I}$ of
$\weightedtrees$ to the set of functions $\intff{0}{1}^\words$ such
that:
\begin{itemize}
  \item for any weighted tree $t$ in $\mathcal{I}$, $\Theta_t$ is a flow on $t$;
  \item for any $x$ in $t$, $t[x] \in \mathcal{I}$ and $\Theta_t(x) > 0$;
  \item for any $xy$ in $t$,
    \begin{equation*}
      \Theta_t (xy) = \Theta_t(x) \Theta_{t[x]} (y).
    \end{equation*}
\end{itemize}
Notice that in our context, $\Theta_t$ might depend on the weights of the
weighted tree $t$.
As a simple first example, let us define the flow rule $\visw$ as in
\cite{liu1997two} (it is denoted there $\overline{\nu}$; when the weights are
all equal, we recover the flow rule $\vis$ of \cite{LPP95}).
For a weighted tree $t$ and $x$ in $t\setminus\{\prt\root\}$, we let
\[
  \visw_t(x) \coloneqq
  \prod_{\root \prec u \preceq x}
  \frac{\wt(y)}{
  \sum_{i=1}^{\numch(\prt y)} \wt(\prt y i)}.
\]
In other words, this is the law of the trajectory of a non-backtracking random
walk starting from $\root$, which randomly chooses a child of its current
position with probability proportional to its weight.
See \cite[Theorem~7]{liu1997two} for the exact dimensionality and
the Hausdorff dimension of $\visw_T$, when
$T$ is a random weighted Galton-Watson tree.

For other examples of flow rule, see $\harm$ (the \emph{harmonic flow rule}) in the next subsection and
$\unif$ (the \emph{limit uniform measure}) in \cite[Section~6]{LPP95} and in
\cref{sec:invariant_harm}.
\subsection{Random walks on weighted trees}

Let $t$ a weighted tree. We associate to $t$ a transition matrix : for all $x \neq
\prt\root$ in $t$ and all $y$ in $t$
\begin{equation}
  \label{eq:defpt}
  \Pq^t \left(x, y\right)
  =
  \begin{cases}
    1 / \left(1 + \sum_{j=1}^{\numch(x)} \wt(xj) \right)
    & \text{if } y = \prt x ; \\
    \wt (xi) / \left(1 + \sum_{i=j}^{\nu_t(x)} \wt(xj) \right)
    & \text{if } y = xi, \text{ for } 1 \leq i \leq \numch(x) ; \\
    0 & \text{otherwise.}
  \end{cases}
\end{equation}
The walk is reflected at $\prt\root$, that is, $\Pq^t (\prt\root, \root) = 1$.
For $x$ in $t$, we denote
$\Pq_x^t$ the probability measure under which the random path 
$\Xa = \left(X_0, X_1, \dotsc \right)$ in $t$ is a
Markov chain starting from $x$ with transition matrix $\Pq^t$.
The associated expectation is denoted $\Eq^t_x$.
Since we will later consider random weighted tree, $\Pq^t_x$ and $\Eq^t_x$
will often be referred to as the ``quenched'' probabilities and expectations.

We say that a weighted tree $t$ is \emph{everywhere transient} if, 
for all $x \neq \prt\root$,
the random path $\Xa$ in $t[x]$ is $\Pq_\root^{t[x]}$-almost surely transient.
When a weighted tree $t$ is everywhere transient, the harmonic measure $\harm_t$ on
its boundary $\rays t$ is the law of
$\Xi \coloneqq \ray \left(\Xa\right)$.
The mapping $t \mapsto \harm_t$ on the set of everywhere transient weighted
trees is then a (positive and consistent) flow rule.

\subsection{Weighted Galton-Watson trees}
\label{sub:weighted_gw}
We consider a reproduction law $\pp = \left(p_k\right)_{k \geq 0}$, that is
a sequence of non-negative real numbers such that
$\sum_{k=0}^{\infty} p_k = 1$. We assume throughout this work
that $p_0 = 0$ and $m \coloneqq \sum_{k \geq 1}^{} p_k k < \infty$.
Under some probability $\Pt$, let $(N, A)$ a random element
in $\Ne\times\bigcup_{k\geq 1}\intoo{0}{\infty}^k$ such that the marginal
law of $N$ is $\pp$ and, for
any $k\geq 1$, if $N = k$, then $A$ lies in $\intoo{0}{\infty}^k$.

The law of the random weighted Galton-Watson tree
$T$ under the probability $\Pt$ is
defined recursively in the following way:
\begin{itemize}
  \item The joint law of 
    $ \left(\nu_T(\root), \wt[T](1), \wt[T](2), \dotsc, \wt[T](\nu_T(\root))\right) $ 
    is the law of $(N,A)$ ;
  \item the reindexed subtrees 
    $T[1]$, \dots,
    $T[\nu_T(\root)]$ are independant and have the same law as $T$.
\end{itemize}
We call the resulting tree a weighted Galton-Watson tree 
 and denote its law $\GW$.
Note that, if we forget the weights, we recover a classical 
Galton-Watson tree of reproduction law $\pp$.
The branching property is still valid in this setting of weighted trees.
More precisely, let
$k \in \Ne$, $ B_1, B_2, \dotsc, B_k $ Borel sets of $ \intoo{0}{\infty}$
and $ \mathcal{B}_1, \mathcal{B}_2, \dotsc, \mathcal{B}_k $ Borel sets of 
$ \weightedtrees $, 
\begin{multline}
  \label{eq:branching1}
  \Pt \left(\nu_T (\root) = k, \, \wt[T] (1) \in B_1, \dots, \wt[T] (k) \in B_k,
  \,
  T[1] \in \mathcal{B}_1 , \dots, T[k] \in \mathcal{B}_k \right) \\
  =
  p_k \Pt \prcd{A \in B_1 \times \dotsm \times B_k}{N = k}
    \prod_{i=1}^{k} \GW \left( \mathcal{B}_i \right).
\end{multline}
Let $f$ a Borel, positive or bounded, function on $\weightedchopped$
and $g$ a Borel, positive or bounded, function
on the space $\weightedtrees$. A consequence of the previous identity is the
following version of the branching property that we will use constantly
throughout this work.
\begin{equation}\label{eq:branching2}
  \Et \left[\indic{x \in T} f \left(T^{\leq x}\right)
  g \left(T[x]\right)\right]
  =
  \Et \left[\indic{x \in T} f\left(T^{\leq x}\right)\right] 
  \Et \left[g\left(T\right)\right].
\end{equation}

We associate to $T$ a transition matrix $\Pq^T$ as in \eqref{eq:defpt} and the associated random walk
$\Xa$.
\begin{exe}
  Let $\lambda > 0$.
  If the weights are all constant and equal to $\lambda^{-1}$, the model is called
  the $\lambda$-biased random walk on a Galton-Watson tree. Given the
  Galton-Watson tree $T$, the transition
  matrix $\Pq^T$ is defined by:
\begin{equation*}
  \Pq^T \left(x, y\right)
  =
  \begin{cases}
    \lambda / \left( \lambda + \numch[T](x) \right)
    & \text{if } y = \prt x ; \\
    1 / \left(\lambda + \nu_T(x) \right)
    & \text{if } y = xi, \text{ for } 1 \leq i \leq \numch[T](x) ; \\
    0 & \text{otherwise.}
  \end{cases}
\end{equation*}
The associated random walk is almost surely transient if and only if 
$\lambda < m$ (see \cite{lyons_1992}).
The dimension drop for the harmonic measure is established in \cite{LPP95} for
$\lambda = 1$ (simple random walk) and in \cite{LPP_biased} for 
$0 < \lambda < m$.
\end{exe}

In \cite{lyons_pemantle_rwre} we have a transience criterion for the random walk
$ \Xa $ on $T$ with (quenched) transition matrix $\Pq^T$, when the weights are
i.i.d. It is generalized for our setting in \cite[Theorem~1.1]{faraud_2011}. 
One can see that the integrability assumptions are not needed for the proof of
the transient case.
\begin{fact}[{\cite[theorem~1.1]{faraud_2011}}]
  If $\min_{\alpha \in \intff{0}{1}}\Et \left[\sum_{i=1}^{\nu_T(\root)}
  \wt[T](i)^\alpha\right] > 1$, then for $\GW$-almost every weighted tree $t$, the
  random walk defined by $\Pq^t$ is transient.
\end{fact}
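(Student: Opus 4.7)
My plan is to deduce transience from the existence of a unit flow of finite energy on the associated electrical network, following Lyons--Pemantle \cite{lyons_pemantle_rwre} adapted to the weighted setting.

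First, the reduction. On any weighted tree $t$, the walk $\Pq^t$ of \eqref{eq:defpt} is reversible, with edge conductance $W(x) := \prod_{\root \prec u \preceq x} \wt(u)$ on the edge joining $\prt x$ to $x$ (detailed balance with $\pi(x) \propto W(x)\bigl(1 + \sum_{i=1}^{\numch (x)} \wt(xi)\bigr)$ is a direct check). By the Doyle--Snell / Thomson theorem, transience is equivalent to the existence of a unit flow $\theta$ from $\root$ to infinity with finite energy $\mathcal{E}(\theta) := \sum_{\root \prec x \in t} \theta(x)^2 / W(x)$, so it suffices to construct such a flow on $\GW$-a.e.\ weighted tree.

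The candidate flow will be the boundary projection of a Mandelbrot multiplicative cascade. Let $\phi(\alpha) := \Et\bigl[\sum_{i=1}^{N} A(i)^\alpha\bigr]$, pick $\alpha^* \in \intff{0}{1}$ with $\phi(\alpha^*) = m^* := \min \phi > 1$, and form the Biggins martingale
\begin{equation*}
  M_n := (m^*)^{-n} \sum_{x \in T,\, |x|=n} W(x)^{\alpha^*}.
\end{equation*}
A preliminary truncation of $N$ and of the weights $A(i)$ at a large level $K$ (preserving $\min \phi^K > 1$ by continuity) allows one to invoke a standard non-degeneracy criterion (the $L^2$ bound, or the Biggins--Kyprianou $L\log L$ criterion) and conclude $M_\infty > 0$ almost surely on non-extinction. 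The cascade measure $\mu$ on $\rays T$ satisfies $\mu(\cyl{x}) = (m^*)^{-|x|} W(x)^{\alpha^*} M_\infty^{[x]}$, with $M_\infty^{[x]}$ the independent cascade limit built from the subtree $T[x]$, and the candidate unit flow is $\theta(x) := \mu(\cyl{x}) / \mu(\rays T)$ on the event $\{\mu(\rays T) > 0\}$.

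Finiteness of the energy follows from a second-moment computation. Using \eqref{eq:branching2} and the independence of the $M_\infty^{[x]}$'s from the first $|x|$ generations, the weighted expectation $\Et\bigl[\mathcal{E}(\theta)\,\mu(\rays T)^2\,\indic{\mu(\rays T)>0}\bigr]$ decomposes level by level into a geometric series whose common ratio is a truncated analog of $\phi(2\alpha-1)/\phi(\alpha)^2$. Log-convexity of $\phi$, together with the hypothesis $\min_{[0,1]}\phi > 1$ and the freedom to re-tune $\alpha$ inside $\intff{0}{1}$ after truncation, makes it possible to keep this ratio strictly below $1$, so the series converges. Hence $\mathcal{E}(\theta) < \infty$ $\Pt$-a.s.\ on $\{\mu(\rays T) > 0\}$, which has positive $\GW$-probability. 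Rayleigh monotonicity then transfers transience of the truncated walk to the original walk (a finite-energy flow on a subtree is a fortiori one on the full tree), and the tail zero--one law on the Galton--Watson tree upgrades this to $\GW$-almost sure transience on non-extinction.

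The main obstacle is securing simultaneously (i) non-degeneracy of the cascade, which typically needs a moment assumption absent from the hypothesis, and (ii) the strict inequality $\phi(2\alpha-1) < \phi(\alpha)^2$ required for finite energy, which does not follow automatically from log-convexity. Both are resolved by the truncation above: after capping $N$ and the $A(i)$ at $K$ one recovers bounded moments and enough room inside $\intff{0}{1}$ to optimize $\alpha$, at the cost of a Rayleigh-monotonicity comparison with the original walk. The boundary cases $\alpha^* \in \{0,1\}$ are handled by classical direct flows (the counting flow when $\alpha^*=0$, the weight-proportional flow when $\alpha^*=1$), for which the energy calculation is immediate.
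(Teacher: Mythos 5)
First, a point of reference: the paper does not prove this statement at all --- it is imported verbatim from \cite{faraud_2011}, whose transience argument in turn goes back to \cite{lyons_pemantle_rwre} --- so your attempt can only be compared with those proofs. Your reduction is sound: the reversibility computation with edge conductance $c(x)=\prod_{\root\prec u\preceq x}\wt(u)$ is correct, as is the reduction of transience to the existence of a unit flow of finite energy, and building that flow from a multiplicative cascade at a well-chosen exponent is indeed the spirit of the arguments in the literature.

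The gap sits exactly at the point you flag as ``the main obstacle'' and then wave away: the claim that truncation plus re-tuning $\alpha$ simultaneously secures non-degeneracy of the cascade and summability of the energy is asserted, not proved, and it is the entire difficulty. Write $\psi=\log\phi$. Your expected-energy computation needs three things at the chosen $\alpha$: (i) $M_\infty>0$ a.s., which even after truncation restores the $L\log L$ moment still requires the Biggins condition $\alpha\psi'(\alpha)<\psi(\alpha)$, and hence confines $\alpha$ to a (possibly tiny) neighbourhood of the minimizer $\alpha^*$ when $\psi$ is steep; (ii) $\E\left[M_\infty^2\right]<\infty$, equivalent to $\psi(2\alpha)<2\psi(\alpha)$, which is \emph{not} implied by $\min_{[0,1]}\psi>0$ and is not restored by capping $N$ and the $A(i)$ at a level $K$ large enough to keep the truncated minimum above $1$; and (iii) $\psi(2\alpha-1)<2\psi(\alpha)$, where for $\alpha<1/2$ the quantity $\phi(2\alpha-1)$ involves negative moments of the weights that truncation from above cannot control --- and truncation from below \emph{increases} conductances, so Rayleigh monotonicity runs the wrong way there. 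Near $\alpha^*$ convexity only gives $\psi(2\alpha^*-1)\geq\psi(\alpha^*)$, with no upper bound by $2\psi(\alpha^*)$: when $\psi(\alpha^*)$ is barely positive while $\psi(0)$ or $\psi(1)$ is large, (iii) fails throughout the neighbourhood where (i) holds, so ``re-tuning $\alpha$'' need not produce a window in which all three conditions are met. This is precisely why the actual proofs do not run a plain second-moment bound on the cascade energy, but instead use a truncated second-moment / first-passage-percolation argument to extract a subtree along whose rays the conductances grow exponentially, and route the flow through that subtree. Until that step is supplied, your sketch is a plausible programme rather than a proof.
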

We will assume throughout this work that we are in this regime.
\subsection{Basic facts of ergodic theory}
\label{sub:basicet}
We recall here some definitions and basic properties that are used in this paper. The
notations of this section are local to this section.
\begin{defi}
  Let $\left(X, \mathcal{F}_X\right)$ and $\left(Y, \mathcal{F}_Y\right)$ two
  measurable spaces and $S_X : X \to X$, $S_Y : Y \to Y$ two measurable
  transformations.
  A \emph{semi-conjugacy} between $\left(X, \mathcal{F}_X, S_X\right)$
  and $\left(Y, \mathcal{F}_Y, S_Y\right)$ is a surjective measurable mapping
  $h : X \twoheadrightarrow Y$ such that $h \circ S_X = S_Y \circ h$.
  
  One says that $h$ is a \emph{conjugacy} between 
  $\left(X, \mathcal{F}_X, S_X\right)$ and 
  $\left(Y, \mathcal{F}_Y, S_Y\right)$
  if, in addition, the semi-conjugacy 
  $h$ is also injective.
\end{defi}
The following well-known 
fact can be checked very directly, so we omit the proof.
\begin{fact}
  Let $\left(X, \mathcal{F}_X, S_X\right)$ and 
  $\left(Y, \mathcal{F}_Y, S_Y\right)$ two measurable spaces endowed with a
  measurable transformation.
  Let $h : X \twoheadrightarrow Y$ a semi-conjugacy and $\mu_X$ a probability measure on $\mathcal{F}_X$.
  Then, if the system $\left(X, \mathcal{F}_X, S_X, \mu_X\right)$ is
  measure-preserving (resp. ergodic, mixing), so is
  $\left(Y, \mathcal{F}_Y, S_Y, \mu_X\circ h^{-1}\right)$.
\end{fact}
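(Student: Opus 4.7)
The plan is to work directly from the single algebraic identity $h \circ S_X = S_Y \circ h$ and its immediate set-theoretic consequence $h^{-1}(S_Y^{-1}(B)) = S_X^{-1}(h^{-1}(B))$ for every $B \in \mathcal{F}_Y$. Setting $\mu_Y \coloneqq \mu_X \circ h^{-1}$, all three properties (measure-preservation, ergodicity, mixing) will follow by rewriting preimages under $S_Y$ on $Y$ as preimages under $S_X$ on $X$, then invoking the corresponding property on the source system.

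For measure preservation, I would simply compute, for $B \in \mathcal{F}_Y$,
\begin{equation*}
  \mu_Y(S_Y^{-1}(B))
  = \mu_X(h^{-1}(S_Y^{-1}(B)))
  = \mu_X(S_X^{-1}(h^{-1}(B)))
  = \mu_X(h^{-1}(B))
  = \mu_Y(B),
\end{equation*}
the third equality using $S_X$-invariance of $\mu_X$. For ergodicity, if $B$ is $S_Y$-invariant (i.e.\ $S_Y^{-1}(B) = B$), then the same identity gives $S_X^{-1}(h^{-1}(B)) = h^{-1}(B)$, so $h^{-1}(B)$ is $S_X$-invariant and thus $\mu_X(h^{-1}(B)) \in \{0,1\}$ by ergodicity of $\mu_X$, which gives $\mu_Y(B) \in \{0,1\}$. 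For mixing, for $A, B \in \mathcal{F}_Y$,
\begin{equation*}
  \mu_Y(A \cap S_Y^{-n}(B))
  = \mu_X(h^{-1}(A) \cap S_X^{-n}(h^{-1}(B)))
  \xrightarrow[n\to\infty]{} \mu_X(h^{-1}(A))\,\mu_X(h^{-1}(B))
  = \mu_Y(A)\,\mu_Y(B),
\end{equation*}
using mixing of $\mu_X$.

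There is no real obstacle here; the statement is essentially formal manipulation of preimages. The only small point worth verifying at the outset is that surjectivity of $h$ is never actually needed for any of the three implications (it is used elsewhere to guarantee that $\mu_Y$ is a probability measure only in the sense that $\mu_Y(Y) = \mu_X(h^{-1}(Y)) = \mu_X(X) = 1$, which is automatic). This is why the authors can dispense with the proof entirely.
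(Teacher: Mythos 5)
Your proof is correct, and it is exactly the direct verification the paper has in mind: the authors explicitly omit the proof of this fact as "very direct", and the whole argument is the single identity $h^{-1}(S_Y^{-1}(B)) = S_X^{-1}(h^{-1}(B))$ applied three times, as you do. Your side remark that surjectivity of $h$ is not needed for any of the three implications is also accurate.
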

%
%
\begin{defi}
  Let  $\left(X, \mathcal{F}, S, \mu \right)$ a measure-preserving system (with
  $\mu (X) = 1$)
  and $A$ in $\mathcal{F}$ such that $\mu (A) > 0$. For $x$ in $X$, let
  \[
    n_A (x) =
    \inf \setof{ k\geq 1}{S^k (x) \in A},
  \]
  with the convention $\inf \emptyset \coloneqq +\infty$.
  For $B$ in $\mathcal{F}$, let
  $\mu_A (B) = \mu (A \cap B) / \mu(A)$ and for $x$ in $X$, let
  $S_A (x) \coloneqq S^{n_A(x)} (x)$ if $n_A(x)$ is finite and (say)
  $S_A (x) \coloneqq x$ if $n_A(x) = \infty$.
  The \emph{induced system} on $A$ is defined as
  $\left(A, \mathcal{F}\cap A, S_A, \mu_A\right)$.
\end{defi}
\begin{lem}
  With the notations and assumptions of the previous definition, the system
  $\left(A, \mathcal{F}\cap A, S_A, \mu_A\right)$ is measure-preserving.
  Moreover,
  we have that the whole system $\left(X, \mathcal{F}, S, \mu \right)$ is ergodic if and only if 
  $\mu \left( \bigcup_{k\geq 1}^{} S^{-k}(A)\right) = 1$ and
  $\left(A, \mathcal{F}\cap A, S_A, \mu_A\right)$ is ergodic.
\end{lem}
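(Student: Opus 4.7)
The plan splits naturally into the two assertions. For the \emph{measure-preservation}, the key is a Kakutani ``tower'' identity. By Poincaré recurrence, $\mu$-a.e.\ $x \in A$ satisfies $n_A(x) < \infty$, so $A = \bigsqcup_{k \geq 1} A_k$ with $A_k = \{x \in A : n_A(x) = k\}$ (modulo a null set). For a Borel $B \subset A$, we have $S_A^{-1}(B) \cap A = \bigsqcup_k A_k \cap S^{-k}(B)$, so what must be shown is
\begin{equation*}
  \mu(B) = \sum_{k \geq 1} \mu\bigl(A_k \cap S^{-k}(B)\bigr).
\end{equation*}
I would introduce the auxiliary sets $H_0 \coloneqq B$ and, for $n \geq 1$,
\begin{equation*}
  H_n \coloneqq A^c \cap S^{-1}(A^c) \cap \dotsb \cap S^{-(n-1)}(A^c) \cap S^{-n}(B),
\end{equation*}
and verify directly, by splitting on whether $x \in A$ or $x \in A^c$, the decomposition
\begin{equation*}
  S^{-1}(H_n) = \bigl(A_{n+1} \cap S^{-(n+1)}(B)\bigr) \sqcup H_{n+1}
\end{equation*}
(this is where $B \subset A$ is used). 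By measure-preservation of $S$, this gives the telescoping relation $\mu(H_n) - \mu(H_{n+1}) = \mu(A_{n+1} \cap S^{-(n+1)}(B))$. The $H_n$ are pairwise disjoint, since each $H_n$ consists of points whose forward orbit first enters $A$ at time exactly $n$ and through $B$, so $\sum_n \mu(H_n) \leq 1$ and $\mu(H_n) \to 0$; summing yields the required identity.

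For the \emph{ergodicity equivalence}, I would use an $S$-invariant ``hull'' in both directions. Assume first that $(X, S, \mu)$ is ergodic. Then $\bigcup_{k \geq 0} S^{-k}(A)$ is $S$-invariant of positive measure, hence of full measure, and a one-step measure-preservation argument upgrades this to $\mu(\bigcup_{k \geq 1} S^{-k}(A)) = 1$. Given an $S_A$-invariant Borel $B \subset A$, define
\begin{equation*}
  \tilde{B} \coloneqq \bigsqcup_{k \geq 0} \bigl\{x \in X : S^j(x) \notin A \text{ for } 0 \leq j < k,\ S^k(x) \in B\bigr\},
\end{equation*}
the set of points whose forward orbit first enters $A$ through $B$. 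Using the $S_A$-invariance of $B$, one verifies $S^{-1}(\tilde B) = \tilde B$ modulo null sets; ergodicity of $S$ then forces $\mu(\tilde B) \in \{0,1\}$, and since $\tilde B \cap A = B$, also $\mu_A(B) \in \{0,1\}$. Conversely, assume the two hypotheses and let $\tilde C$ be $S$-invariant. Then $\tilde C \cap A$ is $S_A$-invariant, so $\mu(\tilde C \cap A) \in \{0, \mu(A)\}$. If $\mu(\tilde C \cap A) = 0$, then $S$-invariance of $\tilde C$ gives $\mu(\tilde C \cap S^{-k}(A)) = \mu(S^{-k}(\tilde C \cap A)) = 0$ for all $k \geq 1$, and the hypothesis $\mu(\bigcup_{k \geq 1} S^{-k}(A)) = 1$ forces $\mu(\tilde C) = 0$. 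The case $\mu(\tilde C \cap A) = \mu(A)$ is handled by passing to the complement.

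The main point requiring care is the bookkeeping of the tower decomposition, in particular the identity $S^{-1}(H_n) = (A_{n+1} \cap S^{-(n+1)}(B)) \sqcup H_{n+1}$, which genuinely uses $B \subset A$. In the ergodicity part, the nontrivial step is to check that the hull $\tilde B$ is $S$-invariant modulo null sets: for $x \in B$ (where $n_A^+(x) = 0$), one has $S(x) \in \tilde B$ iff $S_A(x) \in B$, which is exactly the $S_A$-invariance of $B$; the cases $x \notin A$ and the reverse inclusion $S^{-1}(\tilde B) \subset \tilde B$ reduce to matching first-entrance times and are essentially routine.
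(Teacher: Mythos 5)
Your argument for the ``if'' direction of the ergodicity equivalence is essentially the paper's own proof: intersect an $S$-invariant set $C$ with $A$, use the decomposition of $A$ into the level sets $A_k = \{x\in A : n_A(x)=k\}$ to check that $C\cap A$ is $S_A$-invariant, conclude $\mu(C\cap A)\in\{0,\mu(A)\}$, and in the null case sweep out $C$ along $\bigcup_{k\geq 1}S^{-k}(A)$ using $\mu(C\cap S^{-k}(A))=\mu(S^{-k}(C\cap A))=\mu(C\cap A)$, handling the other case by complementation. Where you genuinely diverge is in scope: the paper proves \emph{only} this direction and cites Einsiedler--Ward (Lemma~2.43) for the measure-preservation of the induced system and for the ``only if'' implication, whereas you supply both. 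Your Kakutani tower argument (the sets $H_n$ of points whose forward orbit first enters $A$ at time $n$ and through $B$, the identity $S^{-1}(H_n)=(A_{n+1}\cap S^{-(n+1)}(B))\sqcup H_{n+1}$, and the telescoping with $\mu(H_n)\to 0$ by disjointness) is the standard and correct proof of measure-preservation, and your hull construction $\tilde B=\bigsqcup_k H_k$ for the ``only if'' direction is likewise sound --- note only that $\tilde B$ is $S$-invariant merely modulo null sets (because $S_A^{-1}(B)=B$ only recovers $B$ up to the null set $A_\infty\cap B$), which you correctly flag and which suffices since ergodicity passes to sets invariant mod $\mu$. So the proposal is correct and strictly more self-contained than the paper; the paper buys brevity by outsourcing the two standard halves to the literature, while your version makes the lemma fully verifiable in place at the cost of the tower bookkeeping.
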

We provide a short proof of the ``if'' part, since we did not find it in the
litterature. For the other assertions, see for instance 
\cite[Lemma~2.43]{einsiedler_ward}.
\begin{proof}
  For $k$ in $\Ne\cup\{\infty\}$, let
  $A_k \coloneqq \setof{x \in A}{n_A(x) = k}$.
  Assume that \[\mu \left( \bigcup_{k\geq 1}^{} S^{-k}(A)\right) = 1\] and
  $\left(A, \mathcal{F}\cap A, S_A, \mu_A\right)$ is ergodic.
  Let $C$ in $\mathcal{F}$ such that $S^{-1}(C) = C$.
  We prove that $C \cap A$ is $S_A$-invariant.
  Indeed,
  \begin{align*}
    S_A^{-1}(C \cap A)
    &=
    S_A^{-1}(C \cap A) 
    \cap A_{\infty}\sqcup  \bigsqcup_{k \geq 1} S^{-k}(C\cap A) \cap A_k
    \\
    &=
    C \cap A_{\infty}\sqcup  \bigsqcup_{k \geq 1} C\cap S^{-k}(A) \cap A_k
    \\
    &= 
    C \cap A_{\infty}\sqcup  \bigsqcup_{k \geq 1} C\cap A_k
    = C \cap A.
  \end{align*}
  Thus, $\mu \left(C \cap A\right)$ equals $0$ or $\mu(A)$. If it is $0$,
  then
  \[
    \mu (C)
    = \mu \left(C \cap \bigcup_{k\geq 1} S^{-k}(A)\right)
    \leq \sum_{k \geq 1}^{} \mu \left(C \cap S^{-k}(A)\right) = 0,
  \]
  since, for any $k \geq 1$,
  \[
    \mu \left(C \cap S^{-k}(A)\right)
    = \mu \left(S^{-k}(C) \cap S^{-k}(A)\right) = \mu (C \cap A).
  \]
  If $\mu (C\cap A) = \mu(A)$, we reason on the complement $\compl{C}$ of $C$,
  which is still invariant by $S$ and satisfies
  $\mu (\compl{C}\cap A) = 0$.
\end{proof}
\section{Regeneration Times}
\label{sec:regeneration}
Let $\weightedtreeswithpath$ be the space of all trees $t$ in $\weightedtrees$
with a distinguished transient path $\xa$ starting from the root.
On $\weightedtreeswithpath$, we define the distance $d_{w,p}$ by
\begin{equation*}
  d_{w,p} \left( \left(t, \xa \right), \left(t', \xa'\right)\right) = 
  \sum_{r \geq 0}^{} 2^{-r-1} 
  \delta_{w,p}^{(r)} \left( \left(t, \xa \right), 
  \left(t', \xa'\right)\right),
\end{equation*}
where 
$\delta_{w,p}^{(r)} \left( \left(t, \xa\right), \left(t', \xa'\right)\right) 
  = 1$
if the vertices of $t$ and of $t'$ do not agree up to height $r$
or if the paths $\xa$ and $\xa'$ do not coincide before the first
time they reach height $r+1$.
Otherwise, 
$\delta_{w,p}^{(r)} \left( \left(t, \xa\right), \left(t', \xa' \right)\right) 
  = \delta_w^{(r)} \left(t, t'\right)$.
The metric space $\weightedtreeswithpath$ is again Polish.

Following \cite[proof of Proposition~3.4]{LPP_biased}, for any $s$ in $\ft \left(\xa\right)$, we consider the tree and the path before
time $s$ : 
\begin{equation*}
  \phi_s \left(t, \xa \right)
  \coloneqq \left(t^{\leq x_s}, \left(x_i\right)_{0 \leq i \leq s}\right).
\end{equation*}
Likewise if $s$ is in $\et \left(\xa\right)$, the reindexed tree and path after
time $s$ is
\begin{equation*}
  \psi_s \left(t, \xa \right)
  = \left(t \left[x_s\right] , \xa \left[s\right] \right),
\end{equation*}
where 
\begin{equation*}
  \xa \left[s \right] \coloneqq \left(x_s^{-1} x_{s + k}\right)_{k \geq 0}.
\end{equation*}
From the definition of fresh times and exit times,
each path is in the corresponding tree or pruned tree.

Now, let $T$ a random weighted tree of law $\GW$ and, conditionally on $T$, let
$\Xa$ a trajectory of the random walk with transition matrix $\Pq^T$,
starting from $\root$.
We denote $\Pa$ the ``annealed'' probability, that is, the probability
associated to the expectation $\Ea$ defined
by
\begin{equation*}
  \Ea \left[f\left(T, \Xa\right) \right] 
  \coloneqq
  \Et \left[ \Eq^T_\root \left[f\left(T, \Xa\right)\right]  \right],
\end{equation*}
for all suitable measurable functions $f$ on $\weightedtreeswithpath$.

For short, we write $\ft$ for $\ft \left( \Xa\right)$,
$\fp$ for
$\fp \left(\Xa \right)$, etc, and $\psi_s$, $\phi_s$ for
$\psi_s \left(T, \Xa\right)$ and $\phi_s \left(T, \Xa\right)$.

\begin{lem}\label{lem:rtbranching}
  Let $s$ in $\Ne$, and $f$ and $g$ measurable and non-negative (or bounded)
  functions, respectively on $\weightedtreeswithpath^{\leq}$ and
  $\weightedtreeswithpath$.
  Then
  \begin{equation*}
    \E \left[\indic{s \in \rt} 
      f \left(\phi_s \right)
      g \left(\psi_s \right)
    \right] 
    =
    \E \left[\indic{s \in \ft}
    f \left(\phi_s\right) \right]
    \E \left[g \left(T, \Xa \right) \indic{\tau_{\prt\root} = \infty}
    \right]. 
  \end{equation*}
\end{lem}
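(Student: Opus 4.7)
The plan is to decompose $\indic{s \in \rt} = \indic{s \in \ft} \indic{s \in \et}$, condition on the value $X_s = x$, and combine the Markov property of the walk at the deterministic time $s$ with the branching property~\eqref{eq:branching2} of the Galton-Watson tree at the vertex $x$. Concretely, I would write the left-hand side as
\[
\sum_{x \in \words}
\Et\Bigl[\indic{x \in T}\, \Eq_\root^T\bigl[\indic{s \in \ft,\, X_s = x}\, \indic{s \in \et}\,
f(\phi_s)\, g(\psi_s)\bigr]\Bigr]
\]
and factor the inner quenched expectation into a function of $T^{\leq x}$ times a function of $T[x]$.

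For the pre-$s$ piece, I would argue that on $\{s \in \ft,\, X_s = x\}$ the walk cannot have visited any strict descendant of $x$ before time $s$ (since reaching such a vertex would force an earlier visit to $x$), and, as $s \ge 1$, its last step is from $\prt x$ to $x$. Thus $(X_0, \dots, X_s)$ stays in $T^{\leq x} \setminus \{x\}$ up to time $s-1$ and then hits $x$; at every visited vertex only the weights of its children are used to compute transition probabilities, and a direct check shows that all these children lie in $T^{\leq x}$. Hence $F(T^{\leq x}) := \Eq_\root^T[\indic{s \in \ft,\, X_s = x} f(\phi_s)]$ is a measurable function of $T^{\leq x}$ alone. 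For the post-$s$ piece, the Markov property at $s$ tells us that, conditionally on $X_s = x$, the process $(X_{s+k})_{k \ge 0}$ is a Markov chain on $T$ started at $x$; the event $\{s \in \et\}$ reads $\{X_{s+k} \neq \prt x \text{ for all } k \ge 1\}$, and the reindexing $y \mapsto x^{-1}y$ identifies this restricted walk with the walk on $T[x]$ started at $\root$ under $\{\tau_{\prt\root} = \infty\}$. The conditional expectation therefore reduces to $H(T[x]) := \Eq_\root^{T[x]}[\indic{\tau_{\prt\root} = \infty}\, g(T[x], \Xa)]$, a measurable function of $T[x]$ alone.

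Combining the two steps yields
\[
\E\bigl[\indic{s \in \rt,\, X_s = x}\, f(\phi_s)\, g(\psi_s)\bigr]
= \Et\bigl[\indic{x \in T}\, F(T^{\leq x})\, H(T[x])\bigr],
\]
and I would invoke the branching property \eqref{eq:branching2} to factor this as $\Et[\indic{x \in T} F(T^{\leq x})] \cdot \Et[H(T)]$. Summing over $x \in \words$, the first factor collapses into $\E[\indic{s \in \ft} f(\phi_s)]$ while the second, independent of $x$, equals $\E[\indic{\tau_{\prt\root} = \infty}\, g(T, \Xa)]$, giving the claim. The main obstacle is the second paragraph: one must carefully confirm that every transition probability used by the walk up to time $s$ is determined by weights recorded in $T^{\leq x}$, paying particular attention to the final step $\prt x \to x$, whose probability involves the weight of $x$ itself (which is indeed part of $T^{\leq x}$, even though $x$ is a leaf of that pruned tree).
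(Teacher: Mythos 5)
Your proposal is correct and follows essentially the same route as the paper: decompose over the value of $X_s$, apply the Markov property at time $s$ to split the quenched expectation into a pre-$s$ factor depending only on $T^{\leq x}$ and a post-$s$ factor depending only on $T[x]$, then apply the branching property \eqref{eq:branching2} and sum over $x$. Your extra care in checking that every transition probability used before time $s$ is determined by weights recorded in $T^{\leq x}$ (including the final step $\prt x \to x$) is a detail the paper leaves implicit, but the argument is the same.
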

\begin{proof}
  We first decompose the expectation according to the value of $X_s$.
  \begin{multline*}
    \E \left[\indic{s \in \rt} 
      f \left(\phi_s \right)
      g \left(\psi_s \right)
    \right] 
    \\
    =
    \sum_{x \in \words}^{}
    \Et \left[
      \indic{x \in T}
      \Eq_\root^T \left[
        \indic{X_s = x, \,
          s \in \ft}
        f \left(T^{\leq x}, \left(X_i\right)_{0\leq i \leq s}\right)
        \indic{s \in \et}
      g \left(T [x] , \left(x^{-1} X_{s+k}\right)_{k \geq 0} \right)
    \right] 
    \right].
  \end{multline*}
  By the Markov property at time $s$, for any fixed $x$ in $T$,
  the quenched expectation can be rewritten
  \begin{multline*}
    \Eq_\root^T \left[
        \indic{X_s = x, \,
          s \in \ft}
        f \left(T^{\leq x}, \left(X_i\right)_{0\leq i \leq s}\right)
        \indic{s \in \et}
      g \left(T [x] , \left(x^{-1} X_{s+k}\right)_{k \geq 0} \right)
    \right] 
    \\
    =
      \Eq_\root^T \left[
        \indic{X_s = x, \,
          s \in \ft}
        f \left(T^{\leq x}, \left(X_i\right)_{0\leq i \leq s}\right)
      \right]
      \Eq_x^T \left[
        \indic{\tau_{\prt x} = \infty}
      g \left(T [x] , \left(x^{-1} X_{k}\right)_{k \geq 0} \right)
    \right]. 
  \end{multline*}
  Now, the first quenched expectation is
  only a function of the weighted tree $T^{\leq x}$ while the second is only a
  function of $T[x]$. So we can use the branching property
  \eqref{eq:branching2} and sum over $x$ in $\words$ to get the result.
\end{proof}

\begin{defi}
  The \emph{conductance} of a weighted tree $t$ is
  \begin{equation*}
    \beta (t) \coloneqq \Pq^t_\root \left(\tau_{\prt\root} = \infty \right).
  \end{equation*}
\end{defi}
From the theory of discrete-time Markov chains, one can check that the tree $t$ is transient if and only if $\beta(t) > 0$.

\begin{lem}
  For $\GW$-almost every weighted tree $t$, for $\Pq^t_\root$-almost every path
  $\xa$, the set $\rt \left(\xa\right)$ is infinite.
\end{lem}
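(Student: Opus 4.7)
The plan is to establish the stronger statement that $|\rt(\Xa)| = +\infty$ holds $\P$-almost surely. I first work under the biased annealed law $\P[\,\cdot\, \mid 0 \in \rt]$, which is well defined since $\P(0 \in \rt) = \Et[\beta(T)] > 0$ under the transience assumption (by \cref{lem:rtbranching} with $s = 0$ and $f \equiv g \equiv 1$), and then transfer the conclusion back to $\P$.

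Under this biased law, set $R \coloneqq |\rt|$, so $R \geq 1$. If $R \geq 2$ then $\rt_1$ is a stopping time, and combining strong Markov at $\rt_1$ with the branching property---essentially the same case analysis as in the proof of \cref{lem:rtbranching}, but now applied at the random time $\rt_1$---the reindexed pair $\bigl(T[\rp_1], \Xa[\rt_1]\bigr)$ has again the biased law and is independent of the pre-$\rt_1$ data. Its regeneration times are in bijection with those of $\Xa$ at times $\geq \rt_1$, giving $|\rt(\Xa[\rt_1])| = R - 1$ and thus the self-similar identity $(R - 1 \mid R \geq 2) \stackrel{d}{=} R$ under the biased law. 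Comparing mass functions then forces either $R = +\infty$ almost surely or $R$ geometric on $\{1, 2, \ldots\}$ with some parameter $p_1 \in (0,1]$. To exclude the geometric alternative I compute the biased mean of $R$. Applying \cref{lem:rtbranching} with $f(\phi_s) = \indi{\tau_{\prt\root} > s}$ and $g \equiv 1$ gives, for every $s \geq 0$,
\[
  \P(s \in \rt,\, \tau_{\prt\root} > s) = \Et[\beta(T)] \cdot \P(s \in \ft,\, \tau_{\prt\root} > s).
\]
Whenever $s \in \rt$ and $s \geq 1$, the walk after $s$ stays in $T[X_s]$ (which does not contain $\prt\root$, since $X_s \neq \root$), so the left-hand event coincides with $\{s \in \rt, 0 \in \rt\}$; summing over $s$ yields $\E[R \, \indi{0 \in \rt}] = \Et[\beta(T)] \cdot \E\bigl[|\ft \cap [0, \tau_{\prt\root})|\bigr] = +\infty$, because on the positive-probability event $\{0 \in \rt\}$ the walk is transient and hence visits infinitely many distinct vertices. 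Thus $R = +\infty$ almost surely under the biased law.

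To transfer to $\P$ I argue quenched. Let $p(t) \coloneqq \Pq^t_\root(R = +\infty)$; the previous step gives $p(t) = 1$ conditionally on $\{0 \in \rt\}$ for $\GW$-a.e.\ $t$. On $\{0 \notin \rt\}$, the walk visits $\prt\root$ and reflects back to $\root$ at the stopping time $\tau_{\prt\root} + 1$; by strong Markov, the walk shifted by this stopping time is an independent copy of the original. Its regeneration count differs from $R$ by at most the finite number of distinct vertices visited before the restart, so $\{R = +\infty\}$ and its shifted analogue coincide, yielding $p(t) = \beta(t) + (1 - \beta(t)) p(t)$ and hence $p(t) = 1$ since $\beta(t) > 0$. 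The main obstacle is the self-similarity step: carefully combining strong Markov at the random time $\rt_1$ with the branching identity \eqref{eq:branching2}, by summing over the possible values of $\rp_1$ as in the proof of \cref{lem:rtbranching}; the remainder is algebra on mass functions and a direct mean computation.
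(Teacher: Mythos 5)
Your proof is correct, but it takes a genuinely different route from the paper's. The paper's argument is a short martingale one: \cref{lem:rtbranching} gives $\P\brcd{\bigcup_{s\geq N}\{s\in\rt\}}{\mathcal{F}_N}\geq\Et\left[\beta(T)\right]>0$ uniformly in $N$, and L\'evy's $0$--$1$ law then forces each of these events to have full probability. You instead establish the renewal structure of regeneration times under the biased law $\P\brcd{\cdot}{0\in\rt}$, deduce that the number $R$ of regenerations is either geometric or a.s.\ infinite, rule out the geometric case via $\E\left[R\,\indic{0\in\rt}\right]=\Et\left[\beta(T)\right]\E\left[\#\left(\ft\cap\intfo{0}{\tau_{\prt\root}}\right)\right]=\infty$, and remove the conditioning by a quenched restart at $\tau_{\prt\root}+1$. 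The individual steps check out: the identification $\{s\in\rt,\,\tau_{\prt\root}>s\}=\{s\in\rt,\,\tau_{\prt\root}=\infty\}$ holds because on $\{s\in\rt\}$ the walk after time $s$ is confined to $T[X_s]$; the regeneration sets of the original and restarted walks differ by finitely many elements; and $p(t)=\beta(t)+(1-\beta(t))p(t)$ closes the argument since $\beta(t)>0$ for $\GW$-a.e.\ $t$. Two caveats. First, $\rt_1$ is \emph{not} a stopping time (exit times look into the future), so ``strong Markov at $\rt_1$'' is a misnomer; what actually saves you is the mechanism you then describe, namely decomposing over the deterministic value $s$ of $\rt_1$, observing that on $\{s\in\rt\}$ the event $\{\rt_1=s,\,0\in\rt\}$ is measurable with respect to $\Phi_s$, and applying \cref{lem:rtbranching}. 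That is exactly the paper's proof of \cref{prop:slab}, so your route amounts to front-loading that proposition (with no circularity, since you condition on $R\geq 2$ rather than assuming infinitely many regenerations). Second, in terms of trade-offs: your approach is heavier but produces the i.i.d.\ slab structure as a byproduct, whereas the paper obtains the lemma in a few lines and only afterwards develops \cref{prop:slab} on the already-cleaned-up space.
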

\begin{proof}
  This proof is very similar to \cite[Lemma~3.3]{LPP_biased}.
  For $k \geq 1$, let $\mathcal{F}_k$ the $\sigma$-algebra on
  $\weightedtreeswithpath$ generated by $X_0, X_1, \dotsc, X_k$ and
  $\mathcal{F}_{\infty}$ the $\sigma$-algebra generated by the whole path $\Xa$.
    For $N$ in $\N$, let $\ft^{(N)}$ the first fresh time after (or at) time $N$.
  Then, 
  \begin{equation*}
    \P \brcd{ \bigcup_{s \geq N}^{} \left\{s \in \rt\right\} }{ \mathcal{F}_N }
    \geq
    \P \brcd{ \ft^{(N)} \in \rt }{ \mathcal{F}_N } \\
    =
    \sum_{s \geq N}^{}
    \E \brcd
    { \indic{\ft^{(N)} = s} \indic{s \in \rt} }{ \mathcal{F}_N }.
  \end{equation*}
    Thus we can use \cref{lem:rtbranching} to get
    \begin{equation*}
    \P \brcd{ \bigcup_{s \geq N}^{} \left\{s \in \rt\right\} }{ \mathcal{F}_N }
    \geq
    \sum_{s \geq N}^{}
    \E \brcd{ \indic{\ft^{(N)} = s } }{ \mathcal{F}_N }
    \E \left[ \indic{\tau_{\prt\root} = \infty} \right]
    =
    \Et \left[\beta \left(T\right)\right] > 0,
  \end{equation*}
  By regular martingale convergence theorem and the fact that for any $N$ in
  $\N$, the event
  $ \bigcup_{s \geq N} \{ s \in \rt \} $ is in $\mathcal{F}_\infty$, we have
  almost surely,
  \begin{align*}
    \indi{ \bigcup_{s \geq N} \{s \in \rt \} }
    &= \lim_{k \to \infty}
    \P \brcd{ \bigcup_{s \geq N}^{} \left\{s \in \rt\right\} }{
    \mathcal{F}_{N+k} }
    \\
    &\geq 
    \P \brcd{ \bigcup_{s \geq N + k }^{} \left\{s \in \rt\right\} }{
    \mathcal{F}_{N+k} }
    \geq \Et \left[\beta (T) \right] > 0.
  \end{align*}
  Hence, $\indi{\bigcup_{s \geq N} \{ s \in \rt \} } = 1$, almost surely.
\end{proof}

We will now work on the space of weighted trees with transient paths that have
infinitely many regeneration times. We still denote it
$\weightedtreeswithpath$ in order not to add another notation.

\begin{prop}
  \label{prop:slab}
  Let $f$ and $g$ measurable (or bounded) functions. For any $n \geq 1$,
  \begin{equation}\label{eq:atregen}
    \begin{split}
      \E \left[ 
        f\left(\Phi_{\rt_n}\right)
        g \left(\Psi_{\rt_n} \right)
      \right] 
      &=\E \left[
        f \left(\Phi_{\rt_n}\right)
      \right] 
      \E \brcd{
      g \left(T, \Xa \right)}{ \tau_{\prt\root} = \infty}
      \\
      &= \E \left[f \left(\Phi_{\rt_n} \right)\right]
      \E \left[ g \left(\Psi_{\rt_n}\right)\right].
    \end{split}
  \end{equation}
\end{prop}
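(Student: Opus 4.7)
The plan is to reduce the proposition to Lemma~\ref{lem:rtbranching} by decomposing the event $\{\rt_n = s\}$ pointwise into a product of $\indic{s \in \rt}$ and an event measurable with respect to $\phi_s$.

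The main deterministic step is the following tree-geometric observation: if $s \in \rt(\xa)$ and $0 \leq s' < s$, then $s' \in \rt(\xa)$ if and only if $s' \in \ft(\xa)$ and $x_k \neq \prt{x_{s'}}$ for every $k$ with $s' \leq k \leq s$. The ``only if'' direction is immediate. For the ``if'' direction, the avoidance condition on $[s', s]$ forces the walk to remain in the subtree above $x_{s'}$ during that interval, so that $x_{s'}$ must be an ancestor of $x_s$. Hence $\prt{x_{s'}}$ does not lie in $t[x_s]$, and since $s \in \et$ confines the walk to $t[x_s]$ after time $s$, it never visits $\prt{x_{s'}}$ afterwards either; thus $s' \in \et(\xa)$.

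Consequently $\{\rt_n = s\} = \{s \in \rt\} \cap A_s^{(n)}$, where $A_s^{(n)}$ is the event ``$s \in \ft$ and exactly $n$ indices $s' < s$ satisfy the past condition above'', and $A_s^{(n)}$ is measurable with respect to $\phi_s$. I would then write
\[
  \E\left[f(\Phi_{\rt_n}) g(\Psi_{\rt_n})\right]
  = \sum_{s \geq 0} \E\left[\indic{s \in \rt} \indi{A_s^{(n)}} f(\phi_s) g(\psi_s)\right]
\]
and apply Lemma~\ref{lem:rtbranching} term by term with past factor $\indi{A_s^{(n)}} f(\phi_s)$ and future factor $g(\psi_s)$, obtaining
\[
  \E\left[f(\Phi_{\rt_n}) g(\Psi_{\rt_n})\right]
  = \left(\sum_{s \geq 0} \E\left[\indic{s \in \ft} \indi{A_s^{(n)}} f(\phi_s)\right]\right) \E\left[g(T, \Xa) \indic{\tau_{\prt\root} = \infty}\right].
\]
Specializing $g \equiv 1$ identifies the bracketed sum as $\E[f(\Phi_{\rt_n})] / \Et[\beta(T)]$, and since $\E\left[g(T,\Xa)\indic{\tau_{\prt\root}=\infty}\right] = \Et[\beta(T)] \cdot \E\brcd{g(T,\Xa)}{\tau_{\prt\root} = \infty}$, the first equality of the proposition follows. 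Plugging $f \equiv 1$ into that first equality gives $\E[g(\Psi_{\rt_n})] = \E\brcd{g(T,\Xa)}{\tau_{\prt\root} = \infty}$, and substituting back yields the second equality.

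The main obstacle is the deterministic tree-geometric step: once $s$ is a regeneration time, the set of earlier regeneration times must be encoded entirely in $\phi_s$. This relies on the acyclicity of the tree (uniqueness of geodesics), which is what lets the ``future-avoidance'' half of each earlier exit condition be upgraded from ``up to time $s$'' to ``forever'' using $s \in \et$. Once this factorization is in hand, the rest is purely a routine summation over $s$ combined with Lemma~\ref{lem:rtbranching}.
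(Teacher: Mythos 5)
Your proof is correct and follows essentially the same route as the paper: decompose over the value $s$ of $\rt_n$, rewrite $\{\rt_n = s\}$ as $\{s \in \rt\}$ intersected with a $\phi_s$-measurable event, apply \cref{lem:rtbranching} term by term, and recover the normalizing constants by specializing $g \equiv 1$ and then $f \equiv 1$. The only (cosmetic) difference is the encoding of that $\phi_s$-measurable event — you characterize the earlier regeneration times directly from the path up to time $s$ via your tree-geometric observation, whereas the paper counts edges crossed exactly once before time $s$ — and you spell out the measurability argument that the paper leaves implicit.
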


\begin{proof}
  Again, this proof is similar to \cite[p.~255]{LPP_biased}.
  For $1 \leq n \leq s$, let $C_n^s$ the event that exactly $n$ edges have been
  crossed exactly one time before time $s$. Reasoning on the value of the $n$-th
  regeneration time, we first get
\begin{equation*}
  \E \left[ f\left(\Phi_{\rt_n}\right) g \left(\Psi_{\rt_n}\right)\right] 
  =
  \sum_{s \geq n}^{}
  \E \left[ \indic{\rt_n = s} f \left(\Phi_s\right) g 
    \left(\Psi_s\right)\right] 
  =
  \sum_{s \geq n}^{}
  \E \left[ \indic{s \in \rt} \indi{C_{n-1}^{s-1}} f \left(\Phi_s\right) 
   g \left(\Psi_s\right)\right].
\end{equation*}
On the event $\{ s \in \rt \}$, the indicator $\indi{C_{n-1}^{s-1}}$ is a
function of $\Phi_s$, thus, using \cref{lem:rtbranching}, we obtain
\begin{align*}
  \E \left[ f\left(\Phi_{\rt_n}\right) g \left(\Psi_{\rt_n}\right)\right] 
  &=
  \E \left[\indic{\tau_{\prt\root} = \infty} g \left(T, \Xa\right)\right] 
  \sum_{s \geq n}^{}
  \E \left[\indic{s \in \ft} \indi{C_{n-1}^{s-1}}
    f \left( \Phi_s \right)
  \right] 
  \\
  &= \E \brcd{ g \left(T, \Xa\right)}{\tau_{\prt\root} = \infty}
  \sum_{s \geq n}^{} \E \left[\indic{\tau_{\prt\root} = \infty}\right] 
  \E \left[\indic{s \in \ft} \indi{C_{n-1}^{s-1}}
    f \left( \Phi_s \right)
  \right]. 
\end{align*}
  Using \cref{lem:rtbranching} the other way around,
\begin{align*}
  \E \left[ f \left( \Phi_{\rt_n} \right) g \left( \Psi_{\rt_n} \right) \right] 
  &= \E \brcd{ g \left(T, \Xa\right) }{ \tau_{\prt\root} = \infty }
  \sum_{s \geq n}^{} 
  \E \left[ \indic{s\in \et} \indic{s \in \ft} \indi{C_{n-1}^{s-1}}
    f \left( \Phi_s \right)
  \right] 
  \\
  &= 
    \E \brcd{
    g \left( T, \Xa \right)}{ \tau_{\prt\root} = \infty}
    \E \left[
      f \left(\Phi_{\rt_n}\right)
    \right].
\end{align*}
Finally, taking $f$ constant equal to one yields the last equality.
\end{proof}
We define on $\weightedtreeswithpath$ the shift at exit times
\begin{equation*}
  \Se : \left(t,\xa \right) \mapsto 
  \Psi_{\et_1 \left(\xa\right)} \left( t, \xa \right)
= \left( t\left[\ep_1\right] , \xa \left[\et_1\right] \right),
\end{equation*}
and the shift at regeneration times
\begin{equation*}
  \Sr :
  \left(t,\xa \right) \mapsto \Psi_{\rt_1 \left(\xa\right)} \left( t, \xa \right)
= \left(t \left[\rp_1\right] , \xa \left[\rt_1\right] \right).
\end{equation*}
For $k \geq 1$, let
\begin{equation*} \Se^k \coloneqq \underbrace{
\Se \circ \dotsb \circ \Se}_{k \text{ times}}. \end{equation*}
Since regeneration times are exit times, for any $(t, \xa)$ in
$\weightedtreeswithpath$,
\begin{equation*}
  \Sr \left(t, \xa\right) =
  \Se^{\rh_1 \left(\xa\right)} \left(t, \xa \right).
\end{equation*}
\begin{cor}
  The law $\mureg$ of $\left(T, \Xa\right)$ on
  $\weightedtreeswithpath$, under the probability measure
  $\P^{*} \coloneqq \P \brcd{ \cdot }{\tau_{\prt\root} = \infty}$ 
  is invariant and mixing with respect to the shift $\Sr$.
\end{cor}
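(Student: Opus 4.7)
The plan is to deduce both invariance and mixing from the slab identity of \cref{prop:slab}.

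\emph{Step~1: Identify $\Sr^n$ with $\Psi_{\rt_n}$.} First I would check, by unwinding the definitions, that whenever $s \in \rt(\xa)$ the regeneration times of the reindexed path $\xa[s]$ are exactly $\{r - s : r \in \rt(\xa),\, r \geq s\}$. The key input is that $s$ being an exit time forces the walk to stay in the subtree rooted at $x_s$ after time $s$, while $s$ being a fresh time prevents the walk from reaching that subtree before time $s$. Applied to $s = \rt_n(\xa)$ this gives $\rt_1(\xa[\rt_n]) = \rt_{n+1} - \rt_n$, and combined with the bookkeeping identities $(\xa[\rt_n])[\rt_{n+1} - \rt_n] = \xa[\rt_{n+1}]$ and $t[x_{\rt_n}][x_{\rt_n}^{-1} x_{\rt_{n+1}}] = t[x_{\rt_{n+1}}]$ it yields $\Sr \circ \Psi_{\rt_n} = \Psi_{\rt_{n+1}}$. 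Iterating from $\Sr = \Psi_{\rt_1}$ then produces $\Sr^n = \Psi_{\rt_n}$ for every $n \geq 1$.

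\emph{Step~2: Invariance.} Next I would plug $f \equiv 1$ into \cref{prop:slab}, obtaining, for every $n \geq 1$,
\begin{equation*}
  \E\left[g(\Psi_{\rt_n})\right] = \E\brcd{g(T, \Xa)}{\tau_{\prt\root} = \infty} = \int g \dd\mureg.
\end{equation*}
Thus the law of $\Psi_{\rt_1} = \Sr(T, \Xa)$ under $\P$ is $\mureg$, and by Step~1 so is the law of $\Psi_{\rt_2} = \Sr \circ \Psi_{\rt_1}$. Pushing forward gives $(\Sr)_{*} \mureg = \mureg$.

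\emph{Step~3: Mixing.} For mixing, I would first note that \cref{prop:slab} also says that $\Phi_{\rt_n}$ and $\Psi_{\rt_n}$ are independent under $\P$; since $\{\tau_{\prt\root} = \infty\} = \{\rt_0 = 0\}$ is $\sigma(\Phi_{\rt_n})$-measurable for every $n \geq 1$, this independence persists under $\P^{*}$, with $\Psi_{\rt_n}$ still of law $\mureg$. Given a cylinder event $A$ depending on $(t^{\leq x_k}, x_0, \dotsc, x_k)$ for some fixed $k$ and an arbitrary measurable $B$ in $\weightedtreeswithpath$, on $\{\rt_n \geq k\}$ the indicator $\indic{A}$ is $\sigma(\Phi_{\rt_n})$-measurable, and since $\rt_n \to \infty$ almost surely we have $\P^{*}(\rt_n < k) \to 0$. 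Combined with Step~1 this yields
\begin{equation*}
  \mureg(A \cap \Sr^{-n}(B)) = \P^{*}\left(A \cap \{\Psi_{\rt_n} \in B\}\right) \xrightarrow[n \to \infty]{} \mureg(A)\,\mureg(B),
\end{equation*}
and a standard Dynkin $\pi$-$\lambda$ argument, using the invariance of Step~2 to close under complements, extends the convergence to arbitrary measurable $A, B$. The main obstacle is Step~1; once the intertwining $\Sr^n = \Psi_{\rt_n}$ is in place, both invariance and mixing follow directly from the conditional independence provided by the slab proposition.
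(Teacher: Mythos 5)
Your overall strategy is the same as the paper's: both invariance and mixing are extracted from \cref{prop:slab} via the identification $\Sr^n = \Psi_{\rt_n}$ and an approximation of general events by cylinder events that become functions of the ``past'' $\Phi_{\rt_n}$ for large $n$. Your Step~1 is exactly the (implicit) bookkeeping the paper relies on, and your Step~2 is correct --- indeed slightly more careful than the paper's one-line ``take $f\equiv 1$'', which only identifies the law of $\Psi_{\rt_n}$ under $\P$ (not under $\P^{*}$) with $\mureg$; your comparison of the $n=1$ and $n=2$ cases legitimately converts this into $(\Sr)_{*}\mureg = \mureg$. (The alternative, used implicitly in the paper, is to observe that $\indic{\tau_{\prt\root}=\infty}$ is itself a function of $\Phi_{\rt_n}$ and to take $f$ equal to that indicator.)

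There is, however, a concrete error in Step~3: the cylinder events you choose are \emph{not} $\sigma\left(\Phi_{\rt_n}\right)$-measurable on $\{\rt_n \geq k\}$. The element $\Phi_k = \left(T^{\leq X_k}, X_0, \dotsc, X_k\right)$ involves the tree pruned at $X_k$, which by definition contains every vertex that is not a strict descendant of $X_k$. If $X_k$ is not an ancestor of the regeneration point $\rp_n$ --- which happens with positive probability whenever the walk backtracks, e.g.\ $X_1 = 1$ while the walk regenerates inside the subtree rooted at the child $2$ --- then every strict descendant of $\rp_n$ lies in $T^{\leq X_k}$, so $\Phi_k$ genuinely depends on $T[\rp_n]$, which is part of $\Psi_{\rt_n}$ and not of $\Phi_{\rt_n} = \left(T^{\leq \rp_n}, X_0, \dotsc, X_{\rt_n}\right)$. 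The asserted factorization therefore fails for these events. The repair is precisely the paper's choice of generating class: take $A$ depending only on the first $N$ generations of the weighted tree and on the path until it escapes these generations for the first time. Since $\rh_n \geq n$, for $n$ large enough such an $A$ is a genuine function of $\Phi_{\rt_n}$ (the strict descendants of $\rp_n$ all lie above height $N$, and the walk has left the first $N$ generations by time $\rt_n$), after which the independence given by \cref{prop:slab}, together with your observation that $\{\tau_{\prt\root}=\infty\}$ is determined by $\Phi_{\rt_n}$, and the final approximation argument all go through as you describe.
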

\begin{proof} For the invariance, take $f$ constant equal to one
  in \eqref{eq:atregen}.

  Now let $f$ and $g$ non-negative measurable functions on $\weightedtreeswithpath$.
  By a monotone class argument, we may assume that $g$ only depends on the $N$
  first generations of the weighted
  tree and on the path until it escapes these generations for the first time.

  Since the $N$-th regeneration point is at least of height $N$, we get, using
  \eqref{eq:atregen}, for all $k \geq N$,
  \begin{align*}
    \E^{*} \left[f \circ \Sr^k \left(T, \Xa \right) g \left(T,\Xa \right)\right]
    &= \E^{*} \left[f \left(T [\rp_k], \Xa [\rt_k]\right) 
      g \left(T, \Xa \right)
    \right] 
    \\
    &= \E^{*} \left[f \left(T, \Xa\right)  \right] 
    \E^{*} \left[ g\left(T, \Xa\right) \right],
  \end{align*}
  thus the system is mixing.\end{proof}

\section{Tower Construction of an Invariant Measure for the Shift at Exit Times}
\label{sec:invariant_exit}

We now build a Rokhlin tower over the system
$ \left(\weightedtreeswithpath, \Sr, \mu_r\right)$ 
in order to obtain a probability measure which is
invariant with respect to the shift $\Se$. This is a classical and general
construction but we provide details in our specific case for the reader's
convenience.

For any $i \geq 1$, let
\begin{equation*}
  E_i \coloneqq \setof{ \left(t, \xa\right) \in \weightedtreeswithpath}{
  \rh_1 \left(\xa\right) \geq i}.
\end{equation*}
We then have
\begin{equation*}
  \weightedtreeswithpath = E_1 \supset E_2 \supset \dotsb .
\end{equation*}
For $i \geq 1$, let 
$\tilde{E}_i \coloneqq E_i \times \{ i \}$ and
$\tilde{E} \coloneqq \bigsqcup_{i \geq 1} \tilde{E}_i$.
Let
$\phi_i : E_i \to \tilde{E}_i$ the natural bijection.
We define the measure
$\tilde{\mureg}^0$ by : for any measurable $\tilde{A}$ in $\tilde{E}$,
\begin{equation*}
  \tilde{\mureg}^{0} \left( \tilde{A} \right) 
  \coloneqq
  \sum_{i \geq 1}^{} \mureg 
  \left( 
    \phi_i^{-1} \left( \tilde{A} \cap \tilde{E}_i \right)
  \right).
\end{equation*}
The total mass of $\tilde{\mureg}^0$ is
\begin{equation*}
  \tilde{\mureg}^0 \left(\tilde{E}\right)
  = \sum_{i \geq 1}^{} \P^{*} \left(\rh_1 \geq i \right)
  = \E^{*} \left[\rh_1 \right].
\end{equation*}
\begin{lem}\label{lem:exprh}
  The expectation $\Ea^{*} \left[\rh_1\right] $ is finite and equals
  $\Et \left[\beta \left(T\right)\right]^{-1}$.
\end{lem}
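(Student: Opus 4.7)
Since $\Pa^{*} = \Pa(\,\cdot\, \mid \tau_{\prt\root}=\infty)$ and, by conditioning on $T$,
\[
\Pa(\tau_{\prt\root}=\infty) = \Et\bigl[\Pq^T_\root(\tau_{\prt\root}=\infty)\bigr] = \Et[\beta(T)],
\]
the identity $\Ea^{*}[\rh_1] = \Et[\beta(T)]^{-1}$ is equivalent to
\[
  \Ea\bigl[\rh_1 \indic{\tau_{\prt\root}=\infty}\bigr] = 1,
\]
and the finiteness of $\Ea^{*}[\rh_1]$ then follows from $\Et[\beta(T)] > 0$, which is the transience assumption. So the whole statement reduces to establishing this single identity.

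To do so, use that on $\{\tau_{\prt\root}=\infty\}$ one has $\rt_0 = 0$, so $\rh_1$ coincides with the number of exit times of $\Xa$ strictly less than $\rt_1$. Thus
\[
  \Ea\bigl[\rh_1 \indic{\tau_{\prt\root}=\infty}\bigr]
  = \sum_{k \geq 0} \sum_{x \in \words,\, |x|=k}
  \Pa\bigl(\ep_k=x,\; k<\rh_1,\; \tau_{\prt\root}=\infty\bigr).
\]
For each term, applying the strong Markov property at $\et_k$ together with the branching property \eqref{eq:branching2} decomposes the trajectory into a past on $T^{\leq x}$ (ending at $x$ at time $\et_k$, with $x$ not yet a regeneration) and a future on $T[x]$ which is a fresh walker starting from $\root$ with $\tau_{\prt\root}=\infty$. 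Integrating out the independent subtree $T[x]$ contributes a factor $\Et[\beta(T)]$, and the remaining sum over $k$ and $x$ should collapse (by telescoping or by a recursion) to $\Et[\beta(T)]^{-1}$, yielding the identity.

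The main obstacle is the global constraint $k<\rh_1$, which requires the absence of regeneration at every intermediate height $1,\dots,k$ and couples the events across levels. A cleaner route is to work with the quenched quantity $M(T) \coloneqq \Eq^T_\root[\rh_1 \indic{\tau_{\prt\root}=\infty}]$ and prove $M(T) = 1$ for every transient weighted tree $T$ via a recursion obtained by conditioning on the first step $X_1 = i$ and unfolding the walker's excursions from the root: either the walker from $i$ never returns to the root (probability $\beta(T[i])$, in which case $\rh_1 = 1$) or it returns, after which one must account for the fact that later excursions may revisit vertices of $T[i]$ already touched during earlier excursions and thus prevent them from being regenerations. Integrating $M(T) = 1$ over $\GW$ then gives the required identity.
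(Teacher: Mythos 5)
Your reduction of the lemma to the single identity $\Ea\bigl[\rh_1\indic{\tau_{\prt\root}=\infty}\bigr]=1$ is correct, and you have put your finger on the right difficulty (the constraint $k<\rh_1$ couples all intermediate heights), but neither of your two routes crosses it. The first route stops exactly where the work begins: after the Markov/branching factorization at level $k$ you are left with the probability that \emph{none} of the heights $1,\dotsc,k$ is a regeneration height, and these gap probabilities do not telescope or ``collapse'' formally. The paper avoids them entirely: it only computes the one-point function $\Pa^{*}(n\in\rh)$, which factorizes cleanly at the genuine stopping time $\tau_x$ (first hitting time of a height-$n$ vertex $x$) into $\Pa\bigl(\tau_{\prt\root}>\tau^{(n)}\bigr)\to\Et[\beta(T)]$, where $\tau^{(n)}$ is the first hitting time of height $n$; it then invokes the \emph{renewal theorem} --- legitimate because \cref{prop:slab} makes the increments $\rh_{k+1}-\rh_k$ i.i.d.\ under $\Pa^{*}$ --- to convert $\lim_n\Pa^{*}(n\in\rh)$ into $1/\E^{*}\left[\rh_1\right]$. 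That renewal-theoretic step is precisely the missing ``collapse'', and it is not free.

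Your second, ``cleaner'' route rests on a false statement: the quenched identity $M(t)\coloneqq\Eq^t_\root\bigl[\rh_1\indic{\tau_{\prt\root}=\infty}\bigr]=1$ does \emph{not} hold for every transient weighted tree; only its $\GW$-average equals $1$, and the averaging (via the branching property \eqref{eq:branching2}, which refreshes the environment above each fresh vertex) is essential. Concretely, let $t$ be the half-line in which vertex $1$ has weight $a$ and every deeper vertex has weight $b>1$; write $p_a=a/(1+a)$, $q_a=1/(1+a)$ and $e_0=\Pq^t_1(\tau_\root=\infty)=1-1/b$. Then $\beta(t)=p_ae_0/(q_a+p_ae_0)$ while $\Pq^t_\root\bigl(\rh_1\geq 2,\,\tau_{\prt\root}=\infty\bigr)=p_a(1-e_0)\beta(t)$, so $M(t)\geq\beta(t)\bigl(1+p_a(1-e_0)\bigr)\to 1+1/b>1$ as $a\to\infty$. (This tree has $p_1=1$, but the same phenomenon occurs for genuinely branching trees: give the root's children very large weights and the grandchildren moderate ones, so that $\beta(T)$ is close to $1$ while the walk still backtracks to the root with non-negligible probability.) Hence $M$ is non-constant on the support of $\GW$ in any non-degenerate model, and a first-step quenched recursion cannot yield $M\equiv 1$. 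To repair the proof, follow the renewal argument (the paper reproduces it from \cite{aidekon_2007}) or find a genuinely annealed substitute for your recursion.
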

We will prove this lemma later.
We now write
  $\tilde{\mureg} \coloneqq \tilde{\mureg}^0 / 
  \tilde{\mureg}^0 \left(\tilde{E}\right)$.
  We define the shift $\St$ on $\tilde{E}$ by :
\begin{equation}
  \St \left(t, \xa, i \right) \coloneqq
  \begin{cases}
    \left(t, \xa, i+1\right) & \text{if $\rh_1(\xa) \geq i+1$} ;
    \\
    \left(\Sr \left(t, \xa\right), 1 \right) &
      \text{if $\rh_1 \left(\xa\right) = i$}.
  \end{cases}
\end{equation}
\begin{lem}
  The measure $\tilde{\mureg}$ is invariant and ergodic with respect to the shift $\St$.
\end{lem}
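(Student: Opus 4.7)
The plan is to verify directly that the measure $\tilde{\mureg}^0$ is $\St$-invariant and then deduce ergodicity of the normalized probability $\tilde{\mureg}$ from the induced-system lemma of \cref{sub:basicet} together with the ergodicity of $\left(\weightedtreeswithpath, \Sr, \mureg\right)$, which is a consequence of the mixing property established in the corollary above. The finiteness of $\tilde{\mureg}^0(\tilde{E})$ needed to normalize is provided by \cref{lem:exprh}.

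For invariance, I would decompose a measurable $\tilde{B} \subset \tilde{E}$ as $\bigsqcup_{i \geq 1} \tilde{B}_i$ with $\tilde{B}_i \subset \tilde{E}_i$ and treat each floor separately. For $i \geq 2$, only points on floor $i-1$ can be shifted into $\tilde{B}_i$, and since $\St$ acts on the lower floors as the identity on the $\weightedtreeswithpath$-coordinate, the preimage has the same $\mureg$-mass as $\phi_i^{-1}(\tilde{B}_i) \subset E_i \subset E_{i-1}$. For $i = 1$, writing $B = \phi_1^{-1}(\tilde{B}_1)$, the preimages under $\St$ come from the top of every tower:
\[
  \St^{-1}(\tilde{B}_1) \cap \tilde{E}_j
  =
  \phi_j\bigl(\{(t,\xa) \in E_j : \rh_1(\xa) = j,\ \Sr(t,\xa) \in B\}\bigr).
\]
Summing over $j \geq 1$, and using that the events $\{\rh_1 = j\}$ partition $\weightedtreeswithpath$ together with the $\Sr$-invariance of $\mureg$, one obtains
\[
  \tilde{\mureg}^0\bigl(\St^{-1}(\tilde{B}_1)\bigr)
  = \mureg\bigl(\Sr^{-1}(B)\bigr)
  = \mureg(B)
  = \tilde{\mureg}^0(\tilde{B}_1),
\]
which completes the invariance check.

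For ergodicity, I identify $\tilde{E}_1$ with $\weightedtreeswithpath$ via $\phi_1$ and observe that the first-return time of $\St$ to $\tilde{E}_1$ starting from $(t, \xa, 1)$ equals $\rh_1(\xa)$, so that the induced transformation on $\tilde{E}_1$ coincides with $\Sr$ and the induced probability measure is the normalization of $\mureg$, hence ergodic. Moreover, from a point $(t, \xa, i)$ one reaches $\tilde{E}_1$ in exactly $\rh_1(\xa) - i + 1$ steps, so $\bigcup_{k \geq 1} \St^{-k}(\tilde{E}_1) = \tilde{E}$. The induced-system lemma of \cref{sub:basicet} then transfers ergodicity from the base to the whole tower.

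No serious obstacle arises: this is the standard Kakutani skyscraper construction, and the pieces have all been set up already. The one point requiring attention is the summation over tower heights on floor $1$ of the invariance computation, where it is crucial that the sets $\{\rh_1 = j\}_{j \geq 1}$ form a partition of $\weightedtreeswithpath$, which holds precisely because regeneration times are almost surely finite thanks to the preceding section.
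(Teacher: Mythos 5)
Your proposal is correct and follows essentially the same route as the paper: a floor-by-floor verification of invariance (splitting each level into the top of the tower, which drops to floor $1$ via $\Sr$, and the rest, which moves up one level), followed by the observation that the induced system on $\tilde{E}_1$ is conjugate to $\left(\weightedtreeswithpath, \Sr, \mureg\right)$ and sweeps out all of $\tilde{E}$, so the induced-system lemma gives ergodicity. The only cosmetic difference is that you phrase the invariance computation with sets where the paper integrates a test function; the substance is identical.
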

\begin{proof}
  Let $f : \tilde{E} \to \R_+ $ a measurable function.
  \begin{align*}
    &\int f \circ \St
    \left(t, \xa, i \right) \dd \tilde \mureg \left(t, \xa, i \right)
    = \sum_{j \geq 1}^{} 
    \int_{\tilde{E}_j}
    f \circ \St \left(t, \xa, i \right) 
    \dd \tilde \mureg \left(t, \xa, i \right)
    \\
    &= \E^{*}\left[\rh_1\right]^{-1}\sum_{j \geq 1}^{} 
    \int_{E_j}
    f \circ \St 
    \left(t, \xa, j \right) \dd  \mureg \left(t, \xa\right)
    \\
    &= \E^{*}\left[\rh_1\right]^{-1} \left(\sum_{j \geq 1}^{} 
    \int_{E_j \setminus E_{j+1}}
    f  
    \left(\Sr(t, \xa), 1 \right) \dd  \mureg \left(t, \xa\right)
    +
    \int_{E_{j+1}}
    f
    \left(t, \xa, j+1 \right) \dd  \mureg \left(t, \xa\right)
  \right)
    \\
    &= \E^{*}\left[\rh_1\right]^{-1}
    \left(\int_{E_1} f \left(\Sr \left(t, \xa\right), 1 \right) 
    \dd \mureg \left(t, \xa \right)
    + \int_{\tilde{E} \setminus \tilde{E}_1}
    f \left(t, \xa, i \right) \dd \tilde \mureg \left(t, \xa, i \right)
  \right).
  \end{align*}
  The fact that $\Sr$ is invariant with respect to $\mureg$ concludes the proof
  of the invariance.

  For the ergodicity, we remark that, by construction,
  \begin{equation*}
    \bigcup_{k = 1}^{\infty} \St^{-k} \left(\tilde{E}_1\right)
    = \tilde{E}.
  \end{equation*}
  and the induction of the system on $\tilde{E}_1$ is canonically
  conjugated to $\left(\weightedtreeswithpath, \Sr, \mureg \right)$, thus is
  ergodic and (see \cref{sub:basicet}) so is the whole system.
\end{proof}

\begin{proof}[Proof of \cref{lem:exprh}]
  This proof can be found in \cite[Subsection~3.1]{aidekon_2007}.
  We reproduce it with our notations for the reader's convenience.
  From \cref{prop:slab}, we know that under $\Pa^{*}$,
  we have $\rh_0 = 0$ and the increments 
  $\rh_1$, $\rh_2 - \rh_1$, \dots,
  $\rh_{k+1} - \rh_k$, \dots  are i.i.d.
  For $n \geq 1$,
  \begin{equation*}
    \Pa^{*} \left(n \in \rh \right) 
    = \E^{*} \left[\# \rh \cap \{ 0, 1, \dotsc, n \}\right] 
    -  \E^{*} \left[\# \rh \cap \{ 0, 1, \dotsc, n-1 \}\right].
  \end{equation*}
  So, by the renewal theorem (\cite[p~360]{feller_vol2}), 
  \begin{equation*}
    \Pa^{*} \left(n \in \rh \right) \xrightarrow[ n \to \infty ]{ }
    1 / \E^{*} \left[\rh_1\right].
  \end{equation*}
  On the other hand, 
  \begin{align*}
    \Pa^{*} \left(n \in \rh \right)
    &=
    \Et \left[\beta \left(T\right)\right]^{-1} 
    \sum_{\abs{x}=n}^{}
    \Ea \left[\indic{x\in T}\indic{\tau_x <\infty}
      \indic{\tau_{\prt\root}>\tau_x}
      \indic{\forall k \geq \tau_x, X_k \neq \prt x}
    \right]
    \\
    &= 
    \sum_{\abs{x}=n}^{}
    \Ea \left[\indic{x\in T}\indic{\tau_x <\infty}
      \indic{\tau_{\prt\root}>\tau_x}
          \right]
    = \Pa \left[ \tau_{\prt\root} > \tau^{(n)} \right],
  \end{align*}
  where $\tau^{(n)} \coloneqq \inf \setof{k \geq 0}{\abs{X_k} = n}$.
  By dominated convergence,
  \begin{equation*}
    \Pa \left[ \tau_{\prt\root} > \tau^{(n)} \right] 
    \xrightarrow[ n \to \infty ]{  } \Pa \left[\tau_{\prt\root} = \infty\right] 
    = \Et \left[\beta (T) \right]. \qedhere
  \end{equation*}
\end{proof}

In order to construct a $\Se$-invariant measure on $\weightedtreeswithpath$, all
we need now is the right semi-conjugacy.
Let $h_{\mathsf{e}} : \tilde{E} \to \weightedtreeswithpath$ defined by
\begin{equation*}
  h_{\mathsf{e}} \left(t, \xa, i \right) \coloneqq
  \left(t \left[\ep_{i-1}\right] , \xa \left[\et_{i-1}\right] \right).
\end{equation*}
By construction,
\begin{equation*}
  h_{\mathsf{e}}\circ \St = \Se \circ h_{\mathsf{e}},
\end{equation*}
that is $h_{\mathsf{e}}$ is a semi-conjugacy on its image, so we get the desired ergodic system.
\begin{cor}
  The probability measure $ \muex \coloneqq 
  \tilde{\mureg}\circ h_{\mathsf{e}}^{-1}$ on 
  $\weightedtreeswithpath$ is invariant and ergodic with respect to the
  shift $\Se$.
\end{cor}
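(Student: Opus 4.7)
The plan is to invoke the general semi-conjugacy fact from \cref{sub:basicet} together with the preceding lemma. Since that lemma already establishes that $\left(\tilde{E}, \St, \tilde{\mureg}\right)$ is measure-preserving and ergodic, the substantive work is done, and what remains is to verify that $h_{\mathsf{e}}$ is a genuine measurable semi-conjugacy onto $\weightedtreeswithpath$.

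First I would record measurability of $h_{\mathsf{e}}$, which is immediate from its explicit form: it is a composition of the reindexing operations $t \mapsto t[x]$ and $\xa \mapsto \xa[s]$ with the measurable maps extracting exit times and exit points from a transient path. For surjectivity, I would argue that $h_{\mathsf{e}}$ restricted to $\tilde{E}_1 = E_1 \times \{1\}$ is the identity (under the natural identification $\phi_1$). Indeed, the measure $\mureg = \P^{*}$ is concentrated on paths satisfying $\tau_{\prt\root} = \infty$, so $\mureg$-almost surely $\et_0(\xa) = 0$ and $\ep_0(\xa) = \root$, whence
\begin{equation*}
    h_{\mathsf{e}} \left(t, \xa, 1 \right)
    = \left(t[\ep_0], \xa[\et_0]\right)
    = \left(t, \xa\right).
\end{equation*}
Since $E_1 = \weightedtreeswithpath$ by definition, this gives surjectivity modulo a $\mureg$-null set, which is harmless.

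Next I would verify the intertwining relation $h_{\mathsf{e}} \circ \St = \Se \circ h_{\mathsf{e}}$, already announced just before the corollary. This is a short case analysis. If $\rh_1(\xa) \geq i+1$, then $\St$ simply increments the level from $i$ to $i+1$, and $h_{\mathsf{e}}$ advances from the $(i-1)$-st exit time to the $i$-th, which is precisely one application of $\Se$ in the reindexed subtree $t[\ep_{i-1}]$ (using that $\ep_i$ in $t$ corresponds to $\ep_1$ in $t[\ep_{i-1}]$, and analogously for exit times). If $\rh_1(\xa) = i$, then $\St$ resets the level and applies $\Sr$; combining this with the identity $\Sr = \Se^{\rh_1}$ from the end of \cref{sec:regeneration} shows that the $i$ levels $1, 2, \dots, i$ of the tower below the reset correspond exactly to the $i$ applications of $\Se$ needed to go from $\ep_0 = \root$ to $\ep_i = \rp_1$, so one additional $\Se$ at level $i$ does match the reset.

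With the semi-conjugacy in place, the fact recalled in \cref{sub:basicet} applied to $h_{\mathsf{e}}$ yields at once that $\muex \coloneqq \tilde{\mureg}\circ h_{\mathsf{e}}^{-1}$ is a probability measure on $\weightedtreeswithpath$ that is both $\Se$-invariant and $\Se$-ergodic. I do not expect a real obstacle: the only point requiring care is the second case of the intertwining, where one must unwind the definition $\Sr = \Se^{\rh_1}$ correctly, but this is purely combinatorial.
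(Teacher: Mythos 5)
Your proposal is correct and follows the same route as the paper: the paper likewise establishes $h_{\mathsf{e}}\circ \St = \Se \circ h_{\mathsf{e}}$ ``by construction'' and then applies the semi-conjugacy fact of \cref{sub:basicet} to the ergodic system $\left(\tilde{E}, \St, \tilde{\mureg}\right)$. The extra details you supply (measurability, the identification of $h_{\mathsf{e}}$ with the identity on $\tilde{E}_1$ via $\et_0 = 0$ under $\P^{*}$, and the two-case check of the intertwining) are exactly the points the paper leaves implicit, and they are all verified correctly.
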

We now investigate further the law $\muex$.
Let $f : \weightedtreeswithpath \to \R_+$ a measurable function. By definition,
\begin{align*}
  \int f \left(t, \xa\right) \dd \muex \left(t, \xa\right)
  &= \int f \left(t \left[\ep_{i-1}\right] , \xa \left[\et_{i-1} \right] \right)
  \dd \tilde{\mureg} \left(t, \xa, i\right)
  \\
  &=\Et \left[\beta(T)\right]
  \sum_{i \geq 1}^{} \int \indic{\rh_1 \geq i} 
  f \left(t [\ep_{i-1}], \xa [\et_{i-1}]\right)
  \dd \mureg \left(t, \xa \right)
  \\
  &= \Et \left[\beta(T)\right] 
  \sum_{j \geq 1}^{}
  \sum_{i = 0}^{j-1}
  \E \left[\indic{\rh_1 = j, \, \tau_{\prt\root} = \infty}
    f \left(T \left[\ep_i\right],
      \Xa \left[\et_i \right] \right)
  \right].
\end{align*}
Reasoning on the value of $\rp_1$ and its strict ancestors, we get for all
$j \geq 1$,
\begin{align*}
  &\sum_{i = 0}^{j-1}
  \E \left[\indic{\rh_1 = j, \, \tau_{\prt\root} = \infty}
    f \left(T \left[\ep_i\right],
      \Xa \left[\et_i \right] \right)
  \right]
  \\
  &=  \sum_{x \in \words , \, \abs{x}=j}^{} \sum_{\root \preceq y \prec x}
  \sum_{s \geq 1}^{}
  \E \left[
    \indic{x \in T, \, \rp_1 = x, \, 
      \tau_{\prt\root} = \infty, \,
      \ep_{\abs{y}} = y, \, \et_{\abs{y}} = s}
    f \left(T [y], \Xa [s] \right)\right].
\end{align*}
Summing over $j$, we obtain 
\begin{align*}
  &\sum_{j \geq 1}^{}
  \sum_{i = 0}^{j-1}
  \E \left[\indic{\rh_1 = j, \, \tau_{\prt\root} = \infty}
    f \left(T \left[\ep_i\right],
      \Xa \left[\et_i \right] \right)
  \right]
  \\
  &=  \sum_{y \in \words}^{}
  \sum_{s \geq 1}^{}
  \E \left[
    \indic{y \in T, \, \tau_{\prt\root} = \infty,
      \ep_{\abs{y}} = y, \, \et_{\abs{y}} = s
    }
    f \left(T[y], \Xa[s]\right) 
    \sum_{x \succ y}^{} \indic{x \in T, \, \rp_1 = x}
  \right] 
  \\
  &=
  \sum_{y \in \words}^{}
  \sum_{s \geq 1}^{}
  \E \left[
    \indic{y \in T, \, \tau_{\prt\root} = \infty, \, \et_{\abs{y}} = s}
    f \left(T[y], \Xa[s]\right) 
    \indic{\rp_1 \succ y}
  \right]. 
\end{align*}
We want to use the Markov property at time $s$. For $s \geq 1$, and $y$ in
$\words$,
let $D_s(y)$ the event that:
\begin{itemize}
  \item the walk has not hit $\prt\root$ before time $s$ ;
  \item the walk hits $y$ at time $s$ and $\prt y$ at time $s-1$ ;
  \item for all $\root \prec z \preceq y$, there exist $1 \leq i < j \leq s$
    such that $X_i = z$ and $X_j = \prt z$.
\end{itemize}
For fixed $y \in \words$ and $s \geq 1$, on the event $\{y \in T\}$,
we denote
$\Xa'$ a random walk in $T$ starting from $y$ independant of $X_0, X_1, \dotsc,
X_s$ and 
$y^{-1} \Xa' \coloneqq \left(y^{-1} X'_0, y^{-1} X'_1, \dotsc \right)$. We
obtain
\begin{align*}
  &\indic{y \in T} 
    \Eq_\root^T \left[
      f \left(T[y], \Xa[s]\right)\indic{\forall k \geq s
      , \, X_k \neq \prt y}\indi{D_s(y)}
    \right]
    \\
  &=
  \indic{y \in T} 
    \Eq_y^T \left[f \left(T[y], y^{-1} \Xa'\right)\indic{\forall k \geq 0
    , \, X'_k \neq \prt y}\right]
  \Pq_\root^T\left[D_s(y) \right] . 
\end{align*}
We remark that the law of $y^{-1}\Xa'$ on the event
$\{ \forall k \geq 0, \, X'_k \neq \prt y \}$ is the same as the law of a random
walk $\Ya$ in the weighted tree $T[y]$, starting from $\root$, on the event
$\{ \forall k \geq 0, \, Y_k \neq \prt\root \}$, thus
\begin{align*}
  &\indic{y \in T} 
    \Eq_y^T \left[f \left(T[y], y^{-1} \Xa'\right)\indic{\forall k \geq 0
    , \, X'_k \neq \prt y}\right]
  \Pq_\root^T\left[D_s(y) \right] 
  \\
  &=\indic{y \in T} 
  \Eq_\root^{T[y]} \left[f \left(T[y], \Ya\right)\indic{\forall k \geq 0
    , \, Y_k \neq \prt y}\right]
  \Pq_\root^T\left[D_s(y) \right] 
  \\
  &=
  \indic{y \in T} 
    \Eq_\root^{T[y]} \brcd{f \left(T[y], \Ya \right)}{
    \tau_{\prt\root} (\Ya) = \infty}
    \Pq_\root^T\left[D_s(y), \forall k \geq s, X_k \neq \prt y
  \right] 
  \\
  &=\indic{y \in T}
    \Eq_\root^{T[y]} \brcd{f \left(T[y], \Ya \right)}
    {\tau_{\prt\root} (\Ya) = \infty}
  \Pq_\root^T\left[ \rp_1 \succ y, \, 
    \rt_1 = s, \,
  \tau_{\prt\root} = \infty \right]. 
\end{align*}
Summing over $s$, we obtain
\begin{align*}
  &\Et \left[\beta (T) \right]^{-1}
  \int f \left(t, \xa\right) \dd \muex \left(t, \xa\right)
  \\
  &=
   \sum_{y \in \words}^{}   
   \E \left[
     \indic{y \in T} 
     \Eq_\root^{T[y]} \brcd{f \left(T[y], \Ya \right)}
      {\tau_{\prt\root} (\Ya) = \infty}
    \Pq_\root^T\left[ \rp_1 \succ y, \, 
      \tau_{\prt\root} = \infty \right]
  \right].
\end{align*}

We may write
\begin{equation*}
  \Pq_\root^T\left[ \rp_1 \succ y, \tau_{\prt\root} = \infty \right]
  =
  \Pq_\root^{T^{\leq y} \gluetrees T[y]}\left[ \rp_1 \succ y, \tau_{\prt\root} = \infty \right]
  \eqqcolon h \left(T^{\leq y}, T[y] \right).
\end{equation*}
By the branching property, for any $y$ in $\words$,
\begin{align*}
  &\E \left[
     \indic{y \in T} 
     \Eq_\root^{T[y]} \brcd{f \left(T[y], \Ya \right)}
      {\tau_{\prt\root} (\Ya) = \infty}
    \Pq_\root^T\left[ \rp_1 \succ y, \, 
      \tau_{\prt\root} = \infty \right]
  \right]
  \\
  &=
  \E \left[\indic{y \in T} 
    \Eq_\root^{\tilde{T}} \brcd{f \left(\tilde{T}, \Ya \right)}
    {\tau_{\prt\root} (\Ya) = \infty}
  h \left(T^{\leq y}, \tilde{T}\right) \right],
\end{align*}
where $\tilde{T}$ is a weighted tree whose law is $\GW$, independant
of $T^{\leq y}$ and $\indic{y \in T}$.
As a consequence, the
conditional expectation of 
$ \indic{y \in T} h \left(T^{\leq y}, \tilde{T}\right)$
given $\tilde{T} = t$ equals
\begin{equation*}
  \Et \left[\indic{y\in T} h \left(T^{\leq y}, t\right)\right]
  \eqqcolon \kappa_y (t) . 
\end{equation*}
Summing over $y \in \words$, we finally obtain the following theorem which 
summarizes the results of this section.
\begin{thm}
  \label{thm:twpergo}
  The system
  $\left(\weightedtreeswithpath, \Se, \muex \right)$ is measure-preserving
  and ergodic.
  The probability measure $\muex$ has the following expression : for all
  non-negative measurable functions $f$,
  \begin{equation*}
    \int f \left(t, \xa \right) \dd \muex \left(t, \xa\right)
    =\frac{1}
          {\Et \left[\kappa(T)\right]}
    \E \left[
      f \left(T, \Xa\right) \kappa \left(T\right) \beta(T)^{-1} 
      \indic{\tau_{\prt\root} = \infty}
    \right]
  \end{equation*}
  where, for all weighted trees $t$,
  \begin{equation}
  \label{eq:defbetakappa}
    \kappa (t)
    \coloneqq
    \Et \bigg[
      \sum_{y \in T}^{}
        \Pq_\root^{T^{\leq y} \gluetrees t} \left( \rp_1 \succ y, \,
        \tau_{\prt\root} = \infty \right)
    \bigg]
    \quad \text{and} \quad
    \beta(t) \coloneqq \Pq^t_\root \left(\tau_{\prt\root} = \infty \right).
  \end{equation}
\end{thm}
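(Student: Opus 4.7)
The plan has two independent components: the measure-theoretic dynamics assertion (invariance and ergodicity of $\Se$ with respect to $\muex$) and the explicit density formula. The first component is essentially a corollary of what has already been established: the preceding corollary states that $\muex = \tilde{\mureg} \circ h_{\mathsf{e}}^{-1}$ is $\Se$-invariant and ergodic via the semi-conjugacy identity $h_{\mathsf{e}} \circ \St = \Se \circ h_{\mathsf{e}}$ together with the ergodicity of $(\tilde{E}, \St, \tilde{\mureg})$. So I would restate this in one sentence and devote the bulk of the proof to deriving the explicit formula.

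For the formula, I would take a non-negative measurable test function $f$ on $\weightedtreeswithpath$ and unfold $\int f\, \dd\muex$ through the tower. By definition of $h_{\mathsf{e}}$ and the normalization $\tilde{\mureg}^0(\tilde{E}) = \E^*[\rh_1] = \Et[\beta(T)]^{-1}$ (from \cref{lem:exprh}), this equals
\[
  \Et[\beta(T)] \sum_{i \geq 1} \E\left[\indic{\rh_1 \geq i,\, \tau_{\prt\root} = \infty}\, f\left(T[\ep_{i-1}], \Xa[\et_{i-1}]\right)\right].
\]
I would then reindex by summing over the vertex $y = \ep_{i-1}$ and the time $s = \et_{|y|}$, rewriting the condition $\rh_1 \geq i$ as $\rp_1 \succ y$ (or $\rp_1 = y$, absorbed into the sum over $y$ of all strict ancestors of $\rp_1$ plus $\rp_1$ itself). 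This yields a double sum $\sum_{y \in \words} \sum_{s \geq 1} \E[\,\indic{y \in T,\, \tau_{\prt\root}=\infty,\, \et_{|y|}=s,\, \rp_1 \succ y}\, f(T[y], \Xa[s])\,]$.

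The heart of the argument is applying the strong Markov property at time $s$. Conditionally on $\{X_s = y,\, \et_{|y|} = s\}$, the post-$s$ trajectory is a walk in $T[y]$ started at $y$ that never returns to $\prt y$; reindexing by $y^{-1}$ identifies it in law with $\Ya$, a walk in $T[y]$ started at $\root$ under the conditional law $\Pq^{T[y]}_\root(\,\cdot \mid \tau_{\prt\root}(\Ya)=\infty\,)$. The remaining pre-$s$ indicator depends only on $T^{\leq y}$ and equals $\Pq^{T^{\leq y} \gluetrees T[y]}_\root(\rp_1 \succ y,\, \tau_{\prt\root}=\infty) =: h(T^{\leq y}, T[y])$. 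At this point the quenched integrand factors as a function of $T^{\leq y}$ times a function of $T[y]$, and I invoke the branching property \eqref{eq:branching2} to take the $\GW$-expectation of $h(T^{\leq y}, \cdot)$ over an independent copy $\tilde T$, recognizing this as $\kappa_y(\tilde T)$. Summing $\sum_{y \in \words} \kappa_y(t) = \kappa(t)$ and reassembling $\Eq_\root^{T[y]}[f(T[y], \Ya) \mid \tau_{\prt\root}=\infty] \cdot \beta(T[y])$ against the factor $\Et[\beta(T)]$ gives the claimed density $\kappa(T)\beta(T)^{-1}\indic{\tau_{\prt\root}=\infty}/\Et[\kappa(T)]$.

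The main obstacle I anticipate is bookkeeping, not conceptual: I must verify that the condition $\{\rp_1 \succ y\}$ depends on $T[y]$ only through its presence in the glued tree (so that $h$ is well-defined), and I must be careful that the Markov decomposition at time $s$ cleanly separates into a $T^{\leq y}$-measurable factor and a $T[y]$-measurable factor before the branching property is applied. The normalization constant $\Et[\kappa(T)]$ is forced by $\muex$ being a probability measure, which I would use as a sanity check rather than recomputing directly.
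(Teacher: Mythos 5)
Your proposal is correct and follows essentially the same route as the paper: invariance and ergodicity are inherited through the semi-conjugacy $h_{\mathsf{e}}$, and the density formula is obtained by unfolding the tower with the normalization $\Ea^{*}[\rh_1] = \Et[\beta(T)]^{-1}$, reindexing over the strict ancestors $y$ of $\rp_1$ and the times $s = \et_{\abs{y}}$, applying the strong Markov property at $s$ to identify the post-$s$ walk with a conditioned walk in $T[y]$, and using the branching property to replace $T[y]$ by an independent copy inside $h(T^{\leq y}, T[y])$, which produces $\kappa_y$ and then $\kappa$. One phrase is loose --- $\Pq_\root^{T}(\rp_1 \succ y,\, \tau_{\prt\root}=\infty)$ is \emph{not} a function of $T^{\leq y}$ alone, which is exactly why it must be recorded as $h(T^{\leq y}, T[y])$ before the branching property is invoked --- but your subsequent steps treat it correctly, so this is a matter of wording rather than a gap.
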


\section{Invariant Measure for the Harmonic Flow Rule}
\label{sec:invariant_harm}

We now slightly change our point of view. We will forget everything about the
random path $\Xa$, except the ray it defines.
Let $\mu_\harm$ the projection on $\weightedtrees$ of the probability measure
$\muex$, that is, the probability defined by:
\begin{equation}\label{eq:muharm}
  \int f(t) \dd \mu_\harm (t)
  =
  \frac{1}
       {\Et\left[\kappa(T)\right]}
  \Et \left[f(T) \kappa(T) \right],
\end{equation}
for all non-negative measurable functions $f$ on $\weightedtrees$.
We denote $\weightedtreeswithrays$ the space of all weighted trees with a
distinguished ray, that is :
\begin{equation*}
  \weightedtreeswithrays \coloneqq
  \setof{ \left(t, \xi\right) }{ t \in \weightedtrees, \, \xi \in \rays t}.
\end{equation*}
We view it as a metric subspace of $\weightedtreeswithpath$.

We build a
Borel probability measure $\mu_\harm \ltimes \harm$ on $\weightedtreeswithrays$ by :
\begin{equation*}
  \int_{\weightedtreeswithrays}
  f \left(t, \xi\right) \dd \left(\mu_\harm \ltimes \harm\right)
  \left(t, \xi\right)
  =
  \int_{\weightedtrees}
  \left(  \int_{\rays t} f(t, \xi) \dd \harm_t \left(\xi\right)\right)
  \dd \mu_\harm \left(t\right),
\end{equation*}
for all positive measurable functions $f : \weightedtreeswithrays \to \R_+$.
The \emph{shift} $\Shift$ on $\weightedtreeswithrays$ is
\begin{equation*}
  \Shift \left(t, \xi \right)
  = \left(t \left[\xi_1\right] , \xi_1^{-1} \xi \right).
\end{equation*}
To check that this new system is (canonically) semi-conjugated to the one of
\cref{thm:twpergo} we need the following lemma.
\begin{lem}
  Let $t$ in $\weightedtrees$. Under the probability $\Eq_\root^{t}$,
  $\ray \left(\Xa\right)$ is independent of the event
  $\{\tau_{\prt \root} = \infty \}$.
\end{lem}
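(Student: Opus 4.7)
My plan is to use a two-step strong Markov decomposition at the first return time of the walk to the root, exploiting the fact that $\ray(\Xa)$ depends only on the tail of $\Xa$ and so is insensitive to truncation of any finite initial segment. Set $\tau_{\root}^{+} := \inf\setof{k \geq 1}{X_k = \root}$. The first step is to prove that $\ray(\Xa)$ is independent of $\{\tau_{\root}^{+} = \infty\}$ under $\Eq_\root^{t}$. Indeed, by the strong Markov property at $\tau_{\root}^{+}$, on $\{\tau_{\root}^{+} < \infty\}$ the shifted trajectory $(X_{\tau_{\root}^{+} + k})_{k \geq 0}$ has law $\Pq_\root^{t}$ and its ray coincides with $\ray(\Xa)$. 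Hence, for any Borel $B \subset \rays{t}$,
\[
  \Pq_\root^{t}(\ray(\Xa) \in B,\, \tau_{\root}^{+} < \infty)
  = \Pq_\root^{t}(\tau_{\root}^{+} < \infty)\, \Pq_\root^{t}(\ray(\Xa) \in B),
\]
which rearranges to $\Pq_\root^{t}(\ray(\Xa) \in B \mid \tau_{\root}^{+} = \infty) = \Pq_\root^{t}(\ray(\Xa) \in B)$.

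In the second step, I would introduce $r := \Pq_\root^{t}(\tau_{\root}^{+} < \infty,\, \prt\root \notin \{X_0, \ldots, X_{\tau_{\root}^{+}}\})$, the probability that the walk returns to $\root$ without ever visiting $\prt\root$ in the meantime. Splitting according to whether $\tau_{\root}^{+}$ is finite gives $\Pq_\root^{t}(\ray(\Xa) \in B,\, \tau_{\prt\root} = \infty)$ as a sum of two pieces. On $\{\tau_{\root}^{+} = \infty\}$ the constraint $\tau_{\prt\root} = \infty$ is automatic, so by the first step this piece equals $\Pq_\root^{t}(\tau_{\root}^{+} = \infty)\, \Pq_\root^{t}(\ray(\Xa) \in B)$. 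On $\{\tau_{\root}^{+} < \infty\}$ the event $\{\tau_{\prt\root} = \infty\}$ factors as ``$\prt\root$ avoided before $\tau_{\root}^{+}$'' times ``$\prt\root$ avoided after $\tau_{\root}^{+}$'', and the strong Markov property at $\tau_{\root}^{+}$ gives that this piece equals $r \cdot \Pq_\root^{t}(\ray(\Xa) \in B,\, \tau_{\prt\root} = \infty)$. Solving the resulting fixed-point equation,
\[
  \Pq_\root^{t}(\ray(\Xa) \in B,\, \tau_{\prt\root} = \infty)
  = \frac{\Pq_\root^{t}(\tau_{\root}^{+} = \infty)}{1 - r}\, \Pq_\root^{t}(\ray(\Xa) \in B).
\]
Specializing $B = \rays{t}$ identifies the prefactor as $\beta(t)$, yielding $\Pq_\root^{t}(\ray(\Xa) \in B,\, \tau_{\prt\root} = \infty) = \beta(t)\, \Pq_\root^{t}(\ray(\Xa) \in B)$, which is exactly the asserted independence.

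The only genuinely delicate step is checking that the strong Markov application in the second decomposition is applied to events that split cleanly into pre- and post-$\tau_{\root}^{+}$ parts: ``$\prt\root$ avoided up to $\tau_{\root}^{+}$'' is measurable with respect to $(X_0, \ldots, X_{\tau_{\root}^{+}})$, while both ``$\ray(\Xa) \in B$'' and ``no future visit to $\prt\root$'' are measurable with respect to the post-$\tau_{\root}^{+}$ trajectory. Once this bookkeeping is made explicit, the rest is the two algebraic manipulations above.
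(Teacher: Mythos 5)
Your argument is correct, but it takes a longer route than the paper's. Both proofs ultimately rest on the same two ingredients: $\ray\left(\Xa\right)$ is a tail function of the path, hence unchanged when a finite initial segment is discarded, and the strong Markov property turns the shifted path into a fresh copy of the walk. The paper applies this \emph{once}, at the stopping time $\tau_{\prt\root}$ itself: on $\{\tau_{\prt\root} < \infty\}$ the walk sits at $\prt\root$, is deterministically reflected to $\root$ at the next step, and the ray of the continuation (which is the ray of the whole path) therefore has the unconditional law $\harm_t$; this gives $\Pq_\root^t\left(x \in \ray\left(\Xa\right), \tau_{\prt\root}<\infty\right) = \Pq_\root^t\left(x \in \ray\left(\Xa\right)\right)\Pq_\root^t\left(\tau_{\prt\root}<\infty\right)$ in one line, and a monotone class argument finishes. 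You instead decompose at the first return time $\tau_\root^+$ to the root, which forces you to prove an auxiliary independence statement (your Step 1) and then to set up and solve a renewal-type fixed-point equation in the quantity $\Pq_\root^t\left(\ray\left(\Xa\right)\in B,\, \tau_{\prt\root}=\infty\right)$. This works, and your bookkeeping of which events are pre- and post-$\tau_\root^+$ measurable is sound, but you lean on two small facts you should state explicitly: $\{\tau_\root^+=\infty\}\subseteq\{\tau_{\prt\root}=\infty\}$ (to reach $\prt\root$ from a vertex of positive height the walk must pass through $\root$, and an excursion to $\prt\root$ returns to $\root$ immediately by reflection), and $r<1$ so that dividing by $1-r$ is legitimate --- the latter holds because $r \leq \Pq_\root^t\left(\tau_\root^+<\infty\right) < 1$ by transience. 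The paper's decomposition buys brevity and avoids the fixed-point step entirely; yours has the mild advantage of never invoking the reflection rule at $\prt\root$ except through the inclusion of events, but at the cost of two Markov decompositions instead of one.
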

\begin{proof}
  Let $x$ in $t$.
  By the Markov property, first at time $\tau_{\prt\root}$ and then at time $1$,
  we have
  \begin{equation*}
    \Pq_\root^t \left(x \in \ray \left(\Xa\right), \,
    \tau_{\prt\root} < \infty \right)
    = \Pq_\root^t \left(x \in \ray \left(\Xa\right)\right)
    \Pq_\root^t \left(\tau_{\prt\root} < \infty \right).
  \end{equation*}
  Since the cylinders
  $\setof{\xi \in \rays t}{x \prec \xi}$, for $x$ in $t$,
  generate the Borel $\sigma$-algebra of $\rays t$, we conclude by a
  monotone class argument.
\end{proof}

\begin{prop}
  The system $\left(\weightedtreeswithrays, \Shift, \mu_\harm \ltimes \harm \right)$
  is measure-preserving and ergodic. Furthermore, the probability measure
  $\mu_\harm$ and $\GW$ are mutually absolutely continuous.
\end{prop}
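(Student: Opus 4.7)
The plan is to exhibit the system $\left(\weightedtreeswithrays, \Shift, \mu_\harm \ltimes \harm\right)$ as a measurable factor of the already-studied system $\left(\weightedtreeswithpath, \Se, \muex\right)$ from \cref{thm:twpergo}. Consider the canonical projection $\pi \colon \weightedtreeswithpath \twoheadrightarrow \weightedtreeswithrays$ defined by $\pi(t, \xa) = (t, \ray(\xa))$. It is surjective (any ray is realised by the non-backtracking walk along it), and since $\ep_1(\xa)$ is precisely the first letter of $\ray(\xa)$ and the exit points of $\xa[\et_1]$ are the $\ep_1$-reindexed tails of those of $\xa$, a direct computation gives $\pi \circ \Se = \Shift \circ \pi$. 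Hence $\pi$ is a measurable semi-conjugacy.

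Next, I would compute the pushforward $\muex \circ \pi^{-1}$ and verify that it coincides with $\mu_\harm \ltimes \harm$. For a non-negative measurable $f$ on $\weightedtreeswithrays$, \cref{thm:twpergo} gives
\begin{equation*}
  \int f \circ \pi \, \dd\muex = \frac{1}{\Et[\kappa(T)]} \E\left[f(T, \ray(\Xa)) \kappa(T) \beta(T)^{-1} \indic{\tau_{\prt\root} = \infty}\right].
\end{equation*}
Since $\kappa(T)$ and $\beta(T)$ are $\sigma(T)$-measurable, conditioning on $T$ and invoking the independence lemma just proved rewrites the inner quenched expectation as
\[
\Eq^T_\root\left[f(T, \ray(\Xa)) \indic{\tau_{\prt\root} = \infty}\right] = \beta(T) \int f(T, \xi) \dd\harm_T(\xi).
\]
The factors $\beta(T)^{\pm 1}$ then cancel, and the resulting expression $\frac{1}{\Et[\kappa(T)]}\Et\bigl[\kappa(T) \int f(T, \xi) \dd\harm_T(\xi)\bigr]$ is precisely $\int f \, \dd(\mu_\harm \ltimes \harm)$ by the definition \eqref{eq:muharm} of $\mu_\harm$. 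The measure-preserving and ergodic properties then transfer from $\muex$ to $\mu_\harm \ltimes \harm$ via the semi-conjugacy fact recorded in \cref{sub:basicet}.

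For the second assertion, the direction $\mu_\harm \ll \GW$ is immediate from \eqref{eq:muharm}, which exhibits the density $\kappa(T) / \Et[\kappa(T)]$. For the converse it suffices to prove $\kappa(t) > 0$ for $\GW$-almost every $t$. I would isolate the $y = \root$ term in the definition \eqref{eq:defbetakappa}: unwinding the gluing convention gives $T^{\leq \root} \gluetrees t = t$, and on $\{\tau_{\prt\root} = \infty\}$ we have $\rp_0 = \root$, so the existence of infinitely many regeneration times established in \cref{sec:regeneration} forces $\rp_1 \succ \root$ automatically. Consequently that single summand contributes $\Pq^t_\root(\tau_{\prt\root} = \infty) = \beta(t)$, and the transience criterion gives $\kappa(t) \geq \beta(t) > 0$ for $\GW$-a.e.\ $t$, hence $\GW \ll \mu_\harm$.

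The main technical obstacle is the computation in the second step: carefully splitting the annealed expectation using the independence of $\ray(\Xa)$ and $\{\tau_{\prt\root} = \infty\}$, and recognising the resulting integrand as the density defining $\mu_\harm$. The absolute continuity is, by comparison, soft once one observes the lower bound $\kappa(t) \geq \beta(t)$ obtained by keeping only the $y = \root$ contribution.
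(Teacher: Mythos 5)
Your proposal is correct and follows essentially the same route as the paper: the semi-conjugacy $(t,\xa)\mapsto(t,\ray(\xa))$, the independence of $\ray(\Xa)$ and $\{\tau_{\prt\root}=\infty\}$ to cancel the $\beta(T)^{\pm1}$ factors and identify the pushforward of $\muex$ with $\mu_\harm\ltimes\harm$, and the lower bound $\kappa(t)\geq\beta(t)>0$ from the $y=\root$ summand for mutual absolute continuity. Your write-up merely makes explicit the pushforward computation that the paper leaves implicit.
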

\begin{proof}
  Let $h_{p \shortrightarrow r} : \weightedtreeswithpath \to \weightedtreeswithrays$ defined by
  $h_{p \shortrightarrow r}\left( t, \xa \right) = \left(t, \ray \left(\xa\right)\right)$.
  The mapping $h_{p
  \shortrightarrow r}$ is surjective and satisfies $h_{p \shortrightarrow
r}\circ \Se = \Shift \circ h_{p \shortrightarrow r}$, so is a
  semi-conjugacy. By the previous lemma, the probability measure
  $\mu_\harm \ltimes \harm$ equals $\muex \circ h_{p \shortrightarrow
  r}^{-1}$.

  We already know that $\mu_\harm$ is absolutely continuous with respect to
  $\GW$. We only need to show that, for $\GW$-almost every tree $t$, the density
  $\kappa(t)$ is positive. This is the case, because
  \begin{align*}
    \kappa (t) &=
    \Et 
    \bigg[
      \sum_{y \in T}^{}
        \Pq_\root^{T^{\leq y} \gluetrees t} \left( \rp_1 \succ y, \,
        \tau_{\prt\root} = \infty \right)
    \bigg]
    \\
    &\geq \Et 
    \bigg[
        \Pq_\root^{T^{\leq \root} \gluetrees t} \left( \rp_1 \succ \root, \,
        \tau_{\prt\root} = \infty \right)
    \bigg]
    = \Pq_\root^t \left(\tau_{\prt\root} = \infty\right)
    = \beta (t),
  \end{align*}
  and $\GW$-almost every tree $t$ is transient, thus is such that $\beta(t)>0$.
\end{proof}
We could also have used \cite[Proposition~5.2]{LPP95} to
prove the ergodicity and the absolute continuity of $\GW$ with respect to
$\mu_\harm$ since our measure $\mu_\harm$ was already known to be
absolutely continuous with respect to $\GW$.
To conclude that there indeed is a dimension drop phenomenon, we proceed as in
\cite[Theorem~7.1]{LPP95} and compare our flow rule $\harm$ to an other flow
rule called $\unif$. Before that, let us collect two equations about the flow
rule $\harm$.

\begin{lem}
  Let $t$ in $\weightedtrees$, and $\Xi$ a random ray in $\rays t$ whose law is
  $\harm_t$. Then,
  \begin{gather}
    \harm_T(i) = \Pq_\root^t \left(i \prec \Xi \right)
    = \frac{\wt(i)\beta \left(t[i]\right)}
           { \sum_{j=1}^{\nu_t{\root}} \wt(j)\beta \left(t[j]\right)};
    \label{eq:harmi}
    \\
    \beta (t)
    = \frac{\sum_{j=1}^{\nu_t(\root)} \wt(j)\beta(t[j])}
           {1 + \sum_{j=1}^{\nu_t(\root)}\wt(j)\beta(t[j])}.
    \label{eq:recharm}
  \end{gather}
\end{lem}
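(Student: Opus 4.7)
The plan is to deduce both identities from a single first-step (or excursion) analysis on the walk started at $\root$, combined with the strong Markov property. The only care needed concerns the artificial reflecting vertex $\prt\root$ and the reindexing of subtrees from Subsection~\ref{sub:weighted_trees}.

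For \eqref{eq:harmi}, I first note that the first equality is a tautology: by construction $\harm_t$ is the law of $\Xi = \ray(\Xa)$, and via the bijection between flows and Borel probability measures on $\rays t$ the value $\harm_t(i)$ equals $\harm_t(\{\xi \in \rays t : i \prec \xi\}) = \Pq^t_\root(i \prec \Xi)$. For the second equality I would decompose the trajectory of $\Xa$ into successive excursions from $\root$. Using \eqref{eq:defpt} and the strong Markov property, a single excursion escapes through a child $j$ of $\root$ with probability $\wt(j)\beta(t[j])/(1+W)$, where $W \coloneqq \sum_{k=1}^{\nu_t(\root)} \wt(k)$; the step to $\prt\root$, of probability $1/(1+W)$, is immediately reflected back to $\root$ and hence contributes only to return. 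Since $t$ is everywhere transient, the number of excursions before escape is almost surely finite, so conditioning on the escaping excursion and using a geometric-sum argument gives $\Pq^t_\root(i \prec \Xi) = \wt(i)\beta(t[i])/C$ with $C \coloneqq \sum_{k=1}^{\nu_t(\root)} \wt(k)\beta(t[k])$, which is exactly \eqref{eq:harmi}.

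For \eqref{eq:recharm} I would apply first-step analysis directly to $\beta(t) = \Pq^t_\root(\tau_{\prt\root} = \infty)$. Conditioning on $X_1$, the step to $\prt\root$ contributes $0$. For a step to child $j$, the key observation is that from $j$ the walk can only reach $\prt\root$ by first returning to $\root$; the strong Markov property at that first return (if any) then yields $\Pq^t_j(\tau_{\prt\root} = \infty) = \beta(t[j]) + (1 - \beta(t[j]))\beta(t)$. Summing over $j$ against the transition probabilities gives the linear equation
\[
  \beta(t)(1 + W) = C + \beta(t)(W - C),
\]
which rearranges to $\beta(t)(1 + C) = C$, i.e.\ \eqref{eq:recharm}.

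The main obstacle is modest: one must carefully separate the contribution of the reflecting step to $\prt\root$ from those of steps into the subtrees, and translate the event ``walk from $j$ never returns to $\root$'' into $\beta(t[j])$ via the subtree reindexing of Subsection~\ref{sub:weighted_trees}. Once this bookkeeping is in place, both identities reduce to short algebraic manipulations; as a sanity check, \eqref{eq:harmi} automatically normalizes to a probability thanks to the denominator $C$, consistent with $\harm_t$ being a flow.
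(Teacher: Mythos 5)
Your proof is correct and takes essentially the same route as the paper: both arguments rest on conditioning on the first step from $\root$ together with the strong Markov property at the return time to $\root$, and on identifying $\Pq^t_j\left(\tau_\root=\infty\right)$ with $\beta\left(t[j]\right)$ via the subtree reindexing. The only cosmetic difference is that you sum the geometric series of excursions explicitly, whereas the paper writes down the resulting linear fixed-point equation for $\Pq^t_\root\left(i\prec\Xi\right)$ (and similarly for $\beta(t)$) and solves it; the algebra is identical.
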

\begin{proof}
  We first apply the Markov property at time $1$ to get
  \begin{multline*}
    \Pq_\root^t \left(i \prec \Xi \right)
    =
    \frac{1}{1 + \sum_{j=1}^{\nu_t(\root)}\wt(j)}
    \bigg[
      \wt(i) \left( \Pq_i^t \left(\tau_\root = \infty\right)
        + \Pq_i^t \left(\tau_\root < \infty ,\, i \prec \Xi \right)
      \right)
      \\
      + \sum_{j \neq i}^{} \wt(j) \Pq_j^{t} \left(\tau_\root < \infty
      ,\, i \prec \Xi
      \right)
      + 1 \times \Pq_\root^t \left(i \prec \Xi \right)
    \bigg]. 
  \end{multline*}
  Using again the Markov property at time $\tau_\root$ yields the first formula.
  The proof of the second formula is similar.
\end{proof}

The uniform flow rule $\unif$ is defined as in $\cite{LPP95}$.
Let $W (T)$ the almost sure limit 
in $\intoo{0}{\infty}$
of $Z_n(T) / c_n$ as $n$ goes to infinity ;
$Z_n(T)$ being the number
of vertices of height $n$ in $T$, and the sequence $(c_n)_{n\geq 1}$ being the
(deterministic) Seneta-Heyde norming sequence of the reproduction law $\pp$ (see
\cite[chapter~5, Section~1]{LyonsPeres_book}).
The flow rule $\unif$ is defined on the first generation by
\begin{equation}
\label{eq:defunif}
  \unif_T \left(i \right) 
  = 
  \frac{W \left(T \left[i\right] \right)}
       { \sum_{\ell = 1}^{\nu_T\left(\root\right)} 
         W \left(T \left[\ell\right] \right) }
  , \quad \forall \, 1 \leq i \leq \nu_T \left(\root\right).
\end{equation}
For other generations, we use the rule
\begin{equation*}
  \unif_T \left(xy \right) = \unif_T (x) \unif_{T[x]} \left(y\right).
\end{equation*}
From the fact that $\lim_{n \to \infty} c_{n+1}/c_n = m$, we deduce the
recursive equation
\begin{equation}\label{eq:recunif}
  W \left(T\right) = \frac{1}{m} \sum_{\ell = 1}^{\nu_T(\root)} W
  \left(T[\ell]\right).
\end{equation}
Notice that the equations \eqref{eq:defunif} and \eqref{eq:recunif} are the
counterparts for $\unif$ of \eqref{eq:harmi} and \eqref{eq:recharm}.
It is known (see \cite[Section~6]{LPP95}) that, under the additional assumption
that $\E \left[N \log N\right] < \infty$, we have that almost surely
\[
  \hausd \unif_T = \log m = \hausd \rays T,
\]
that is, $\unif$ is maximal in some sense. However, we will not need this fact
(except the last equality, which is true without additional assumptions, see
\cite[Proposition~6.4]{Lyons_rwperco}).
\begin{lem}
  \label{lem:harmnequnif}
  For $ \GW$-almost any weighted tree $t$,
  $\harm_t \neq \unif_t$, unless $p_m = 1$ for some integer $m \geq 2$
  and the weights are all deterministic
  and equal.
\end{lem}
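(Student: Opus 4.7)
The plan is to argue by contradiction: assume $\GW(\harm_t = \unif_t) > 0$ and show that $\pp$ must be a point mass $p_m = 1$ for some $m \geq 2$, and that the weights must be deterministic and equal. I first establish a zero-one law for the event $B \coloneqq \{t \in \weightedtrees : \harm_t = \unif_t\}$. The flow-rule property $\theta_t(xu) = \theta_t(x)\, \theta_{t[x]}(u)$ applied to both $\theta = \harm$ and $\theta = \unif$ makes $B$ hereditary: if $t \in B$ then $t[x] \in B$ for every $x \in t$. Lifting $B$ to $\weightedtreeswithrays$ (the lift does not depend on the chosen ray), this gives $\indi{B} \leq \indi{B} \circ \Shift$, which becomes an almost sure equality by the $\Shift$-invariance of $\mu_\harm \ltimes \harm$. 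The ergodicity of $(\weightedtreeswithrays, \Shift, \mu_\harm \ltimes \harm)$ established in the preceding proposition then forces $\mu_\harm(B) \in \{0, 1\}$, and the mutual absolute continuity of $\mu_\harm$ and $\GW$ transfers this to $\GW(B) \in \{0, 1\}$.

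Assume now $\GW(B) = 1$ and set $R(t) \coloneqq \beta(t)/W(t)$. Combining \eqref{eq:harmi} with \eqref{eq:defunif} at the root of $T$, the quantity $A(i)\, R(T[i])$ must take the same value for all $i \in \{1, \ldots, N\}$ almost surely. By the branching property, conditionally on $(N, A)$ the subtrees $T[1], \ldots, T[N]$ are i.i.d.\ with law $\GW$. Pick $k \geq 2$ with $p_k > 0$ (such $k$ exists since $p_0 = 0$ and $p_1 < 1$): if $R$ were non-degenerate under $\GW$, then conditionally on $N = k$ and on any realisation of $A$, the identity $R(T[j]) = (A(i)/A(j))\, R(T[i])$ would require two i.i.d.\ non-degenerate positive random variables to be almost surely proportional. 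Comparing distributions forces $A(i)/A(j) = 1$, after which the independence of $R(T[i])$ and $R(T[j])$ forces $R$ to be constant\,---\,a contradiction. Therefore $R \equiv \kappa$ for some deterministic $\kappa \in (0,1)$, and $A(1) = \cdots = A(N) \eqqcolon \Lambda$ almost surely, with $\Lambda$ a random scalar depending only on the root data, hence independent of the $T[i]$'s.

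Substituting $\beta = \kappa W$ into \eqref{eq:recharm} and using \eqref{eq:recunif} in the form $\sum_{i=1}^N W(T[i]) = m\, W(T)$, a direct manipulation (legitimate because $W(T) > 0$ $\GW$-almost surely, as $p_0 = 0$) yields
\begin{equation*}
  W(T) \;=\; \frac{1}{\kappa}\!\left(1 - \frac{1}{\Lambda m}\right)
  \qquad \GW\text{-a.s.}
\end{equation*}
The right-hand side is a function of $\Lambda$ alone and is thus independent of $T[1], \ldots, T[N]$, whereas by \eqref{eq:recunif} the left-hand side has non-degenerate law conditionally on $(N, \Lambda)$ unless $W$ is itself $\GW$-almost surely constant. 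A standard Laplace-transform argument on the functional equation $W \stackrel{d}{=} m^{-1}\sum_{i=1}^N W_i$ shows this happens if and only if $\pp$ is a point mass $p_m = 1$ for some integer $m \geq 2$, in which case $W \equiv 1$ and the display fixes $\Lambda = 1/(m(1-\kappa))$, a deterministic constant. Conversely, on an $m$-regular tree with constant weight, both $\harm_t$ and $\unif_t$ put mass $1/m$ on each child of every vertex and therefore coincide. The crux of the argument is the proportional-i.i.d.\ step of the second paragraph: one must first condition on $(N, A)$ (whose components are a priori correlated with one another and with $N$) before legitimately invoking the distributional identity $X \stackrel{d}{=} cX$ that forces the positive random variable $R$ to be almost surely constant.
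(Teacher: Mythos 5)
Your proof is correct and follows essentially the same route as the paper: a zero-one law reduces the problem to the case $\harm_T = \unif_T$ almost surely, the identity at the root then forces the weights to be equal and $\beta = \kappa W$ for a deterministic $\kappa>0$, and substituting into the recursions \eqref{eq:recharm} and \eqref{eq:recunif} forces $W$, $N$ and the common weight to be deterministic. The only differences are in execution: you prove the zero-one law directly from the ergodicity of $\left(\weightedtreeswithrays, \Shift, \mu_\harm \ltimes \harm\right)$ where the paper cites \cite{LPP95}, and you extract the degeneracy of $\beta/W$ via the ``i.i.d.\ and almost surely proportional implies constant'' argument (which cleanly avoids integrability questions about $W$) where the paper takes successive conditional expectations of \eqref{eq:betaw}.
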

\begin{proof}
  We prove it by contradiction. By \cite[proposition~5.1]{LPP95},
  we may assume that almost surely, $\harm_T = \unif_T$.
  Let $k \geq 2$ and $i$ and $j$ distinct integers in $\intff{1}{k}$. We reason on the
  event $\{ \nu_T(\root) = k \}$, assuming it has positive probability (we
  recall that, by assumption, $\Pt \left(\nu_T(\root) = 1 \right) < 1$).
  Since $\harm_T(i) = \unif_T(i)$ and $\harm_T(j) = \unif_T(j)$, we have
  \begin{equation*}
    \frac{\wt[T](i) \beta \left(T[i]\right)}
           {W \left(T[i]\right)}
           = \frac{ \sum_{\ell = 1}^{k} \wt[T](\ell) \beta \left(T[\ell]\right)}
           {\sum_{\ell = 1}^{k} W \left(T[\ell]\right)}
           = \frac{\wt[T](j)\beta \left(T[j]\right)}
           {W \left(T[j]\right)}.
  \end{equation*}
  In particular,
  \begin{equation}\label{eq:betaw}
    \wt[T](i) \beta \left(T \left[i\right] \right) W \left(T [j]\right)
    = 
    \wt[T](j) \beta \left(T \left[j\right] \right) W \left(T [i]\right).
\end{equation}
  We first take the conditional expectation with respect to the $\sigma$-algebra generated
  by $\wt[T](i)$, $\wt[T](j)$ and the tree $T[i]$ to get that
  $\Et \left[W (T) \right] < \infty$, so it is $1$.
  Then, conditioning only with respect to $\wt[T](i)$ and
  $\wt[T](j)$, we get $\wt[T](i) = \wt[T](j)$. Let us denote
  $A_k \coloneqq \wt[T](1) = \dotsb = \wt[T](k)$. Simplifying in \eqref{eq:betaw} and
  taking the conditional expectation with respect to the subtree $T[i]$ gives
  $\beta \left(T[i] \right) = \alpha W \left(T[i] \right)$, for
  $\alpha = \Et \left[\beta \left(T\right)\right] \in \intoo{0}{1}$.
  Since the law of $T[i]$ is itself $\GW$, we have, for $\GW$-amost every
  tree $t$,
  \begin{equation*}
    \beta (t) = \alpha W\left(t\right).
  \end{equation*}
  We reason again on the event $\left\lbrace\nu_T(\root) = k\right\rbrace$.
  Using the recursive equations \eqref{eq:recharm} and \eqref{eq:recunif},
  we get
  \begin{equation*}
    \alpha W(T) = \beta (T) =
    \frac{A_k \alpha \sum_{j = 1}^{k} W \left(T[j]\right)}
         {1 + A_k \alpha \sum_{j = 1}^{k} W \left(T[j]\right)}
    = \frac{A_k \alpha m W (T)}
           {1 + A_k \alpha m W(T)}.
  \end{equation*}
  We then obtain
  \begin{equation*}
  m A_k \left(\alpha W\left(T\right) - 1 \right) = 1,
  \end{equation*}
  which, by independence, can only happen if $W$ and $A_k$ are almost surely
  constant. This is possible only if the law $\pp$ is degenerated and
  $k = m$. In this case, we have 
  $A_k = \frac{1}{k \left(\alpha - 1 \right)} > \frac{1}{k}$, that is,
  our random walk model reduces to transient $\lambda$-biased random walk on a
  regular tree, with deterministic $\lambda < m$.
\end{proof}

We now have all the ingredients to prove \cref{thm:ddrop}.

\begin{proof}[Proof of \cref{thm:ddrop}]
  This is very similar to \cite[Theorem~7.1]{LPP95}. We detail the proof for
  completeness and because of the fact that we work on weighted trees
  (although, as we will see, this does not make any difference here).
  For $(t,\xi)$ in $\weightedtreeswithrays$, let
  \begin{equation*}
  f(t,\xi) \coloneqq -\log \left(\harm_t (\xi_1)\right).
  \end{equation*}
  For any $k \geq 1$, we have
  \[
    f\circ \Shift^k (t,\xi) = 
    -\log\left(\harm_{t[\xi_k]}(\xi_k^{-1}\xi_{k+1})\right).
  \]
  On the other hand, using the flow-rule property of $\harm$, for any $n\geq 1$,
  \begin{equation*}
    \harm_t(\xi_n)
    =
    \prod_{k=0}^{n-1}
    \harm_{t[\xi_k]}(\xi_k^{-1}\xi_{k+1}).
  \end{equation*}
  Thus, by the ergodic theorem, and the fact that $\GW$ is absolutely continuous
  with respect to $\mu_\harm$, we have, for $\GW$-almost every weighted tree $t$
  , for $\harm_t$-almost every $\xi$,
  \begin{align*}
    -\frac{1}{n}\log \left(\harm_t(\xi_n)\right)
    \xrightarrow[ n\to\infty ]{  }
    &\int \left(\int f(t,\xi) \dd \harm_t(\xi)\right)\dd\mu_\harm(t)
    \\
    =&\frac{1}{\Et[\kappa(T)]}
    \Ea\left[-\log \left(\harm_T(\Xi_1)\right)\kappa(T)\right].
  \end{align*}
  Now, assume that our model does not reduce to $\lambda$-biased random walk
  on an $m$-regular tree.
  By Shannon's inequality (strict concavity of $\log$ and Jensen's
  inequality), for $\GW$-every $t$,
  \begin{equation}
    \label{eq:shannon}
    \sum_{i=1}^{\nu_t(\root)}
    \harm_t(i) (-\log) \left(\harm_t(i)\right)
    \leq
    \sum_{i=1}^{\nu_t(\root)}
    \harm_t(i) (-\log) \left(\unif_t(i)\right),
  \end{equation}
  with equality if and only if, for all $1 \leq i \leq \nu_t(\root)$,
  $\harm_t(i) = \unif_t(i)$. By \cref{lem:harmnequnif}, this happens with
  $\GW$-probability strictly less than $1$. Thus, integrating 
  \eqref{eq:shannon}
  with respect
  to $\mu_\harm$, which is equivalent to $\GW$, we get
  \[
    \frac{1}{\Et[\kappa(T)]}
    \Ea\left[-\log \left(\harm_T(\Xi_1)\right)\kappa(T)\right]
    <
    \frac{1}{\Et[\kappa(T)]}
    \Ea\left[-\log \left(\unif_T(\Xi_1)\right)\kappa(T)\right].
  \]
  This last term is the integral with respect to $\mu_\harm\ltimes\harm$
  of
  \begin{equation*}
    -\log \left(\frac{W(t[\xi_1])}{
    \sum_{i=1}^{\nu_t(\root)}W(t[i])}\right)
    = -\log \left(\frac{W (t[\xi_1])}{mW(t)}\right)
    = \log m + \log (W(t)) - \log( W(t[\xi_1])).
  \end{equation*}
For $(t,\xi)$ in $\weightedtreeswithrays$, let
$
  g(t,\xi) = W(t)
$.
We want to prove that $g - g\circ \Shift$ is integrable with integral $0$.
By the fact that our system is measure preserving and the ergodic theory Lemma~{6.2} in \cite{LPP95} (see also 
\cite[Lemma~17.20]{LyonsPeres_book}) all we need to show is that
$g - g\circ \Shift$ is bounded from below by an integrable function.
This is indeed the case because
\[
  \log W(t) - \log W(t[\xi_1])
  =
  \log \frac{ \sum_{i = 1}^{\nu_t(\root)} W(t[i]}{mW(t[\xi_1])}
  \geq -\log m,
\]
which concludes our final proof.
\end{proof}
\bibliographystyle{plain}
\bibliography{ddrop_rwre.bib}
\end{document}